\newdimen\progindentamount\progindentamount2em
\newdimen\progindentamount\progindentamount8mm
\def\i#1{\strut\egroup\egroup\hbox to\hsize\bgroup
         \vrule width0pt\hskip#1\progindentamount
         \vtop\bgroup\advance\hsize-#1\progindentamount\strut\ignorespaces}
\newenvironment{algorithm}{\par\begingroup\def\\{hfill\break}
                           \setbox0\hbox\bgroup\bgroup
                         }{\strut\egroup\egroup\endgroup
                         }
\newenvironment{algo-io}[4]{%
  \begin{figure-algo}
  \caption{\label{alg:#1} {#2}}
  \begin{list}{}{\leftmargin 16mm\labelwidth15mm\labelsep0.1cm\topsep0pt\parsep0pt\itemsep 0pt}
    \item[Input:\hfill] {#3}
    \item[Output:\hfill] {#4}
      \hspace{0.2cm}
  \end{list}
  \begin{algorithm}
}
{%
  \end{algorithm}
  \end{figure-algo}
}
\newtheorem{theorem}{Theorem}
\newtheorem{lemma}[theorem]{Lemma}
\newtheorem{definition}{Definition}
\newcommand{\multilinecomment}[1]{}
\let\Exp=\EE
\newcommand{\Bin}{\mathrm{Bin}}
\def\clap#1{\hbox to 0pt{\hss#1\hss}}
\def\mathclap{\mathpalette\mathclapinternal}
\def\mathclapinternal#1#2{\clap{$\mathsurround=0pt#1{#2}$}}
\long\def\symbolfootnote[#1]#2{\begingroup
\def\thefootnote{\fnsymbol{footnote}}\footnote[#1]{#2}\endgroup}
\title{Random Hyperbolic Graphs: Degree Sequence and Clustering\\\small{Full Version}}
\author{Luca Gugelmann \\ \small{Institute of Theoretical Computer Science} \\ \small{ETH Zurich, 8092 Zurich, Switzerland} \\\small{\texttt{lgugelmann@inf.ethz.ch}}
\and Konstantinos Panagiotou \\ \small{Department of Mathematics} \\ \small{University of Munich, 80333 Munich, Germany} \\\small{\texttt{kpanagio@mpi-inf.mpg.de}} \and Ueli Peter \\ \small{Institute of Theoretical Computer Science} \\ \small{ETH Zurich, 8092 Zurich, Switzerland} \\\small{\texttt{upeter@inf.ethz.ch}}}
\date{\today}
\numberwithin{equation}{section}
\numberwithin{theorem}{section}
\begin{document}

\maketitle 

\thispagestyle{empty}

\begin{abstract}
In the last decades, the study of models for large real-world networks has been a very popular and active area of research. A reasonable model should not only replicate all the structural properties that are observed in real world networks (for example, heavy tailed degree distributions, high clustering and small diameter), but it should also be amenable to mathematical analysis. There are plenty of models that succeed in the first task but are hard to analyze rigorously. On the other hand, a multitude of proposed models, like classical random graphs, can be studied mathematically, but fail in creating certain aspects that are observed in real-world networks.

Recently, Papadopoulos, Krioukov, Bogu\~n\'{a} and Vahdat [INFOCOM'10] introduced a random geometric graph model that is based on hyperbolic geometry. The authors argued empirically and by some preliminary mathematical analysis that the resulting graphs have many of the desired properties. Moreover, by computing explicitly a maximum likelihood fit of the Internet graph, they demonstrated impressively that this model is adequate for reproducing the structure of real graphs with high accuracy.

In this work we initiate the rigorous study of random hyperbolic  graphs. We compute exact asymptotic expressions for the expected number of vertices of degree $k$ for all $k$ up to the maximum degree and provide small probabilities for large deviations. We also prove a constant lower bound for the clustering coefficient. In particular, our findings confirm rigorously that the degree sequence follows a power-law distribution with controllable exponent and that the clustering is nonvanishing. 
\end{abstract}

\newpage

\section{Introduction}
Modeling the topology of large networks is a fundamental problem that has attracted considerable attention in the last decades. Networks provide an abstract way of describing relationships and interactions between elements of complex and heterogeneous systems. Examples include technological networks like the World Wide Web or the Internet, biological networks like the human brain, and social networks which describe various kinds of interactions between individuals.

An accurate mathematical model can have enormous impact on several research areas. From the viewpoint of computer science, an obvious benefit is that it could enable us to design more efficient algorithms that exploit the underlying structures. Moreover, the process of modeling may suggest and reveal novel types of qualitative network features, which become patterns to look for in datasets. Finally, an appropriate model allows us to generate artificial instances, which resemble realistic instances to a high degree, for simulation purposes. Unfortunately, from today's point of view, a significant proportion of the current literature is devoted only to experimental studies of properties of real-world networks, and there has been only little rigorous mathematical work.

There are (at least) two requirements for a reasonable model for real-world networks. First, it must be able, when setting the parameters appropriately, to replicate the salient features of the real-world graphs under consideration. Moreover, a second desired property is that the model should be mathematically tractable and simple enough to be of use in large scale simulations. There are plenty of models that satisfy the first criterion, but are hard to analyze from a mathematical viewpoint. On the other hand, there exists a plethora of analytically tractable models, which unfortunately do not yet replicate satisfactory enough the properties that are observed in large networks.

In this work we initiate the rigorous study of a class of models for large networks, the so-called~\emph{random hyperbolic graphs}. Such graphs were shown empirically to have startling similarities with several real-world networks, and in particular with the Internet graph (i.e., the network formed by the routers and their physical connections). Before we describe the model and our results, let us proceed with considering some properties of large networks in a little more detail.

\vspace{-10pt}\paragraph{Properties of large networks.}
Since the 60's, the study of networks of various kinds has grown into a significant research area. One of the initiators in this field, the sociologist Stanley Milgram, investigated the network that is obtained from the relationships among people~\cite{milgram1967small, travers1969experimental}. In his work he discovered what is nowadays known as the \emph{small-world phenomenon}, which postulated that the distance between two random people is on average between five and six. Outside the context of social networks this has become synonymous to a graph with a comparatively low diameter/average path length, and nowadays many networks are known to possess this property~\cite{albert1999internet, amaral2000classes,watts1998}.

Another property that is found in many networks addresses the degree distribution. In a celebrated paper, Faloutsos et.\ al.\ \cite{faloutsos1999power} observed that the Internet exhibits a so-called scale-free nature: the degree sequence follows approximately a \emph{power-law} distribution, which means that the number of vertices of degree $k$ is proportional to some inverse power of $k$, for all sufficiently large $k$. This sets such a network dramatically apart from e.g.\ a typical Erd\H{o}s-R\'{e}nyi random graph and stirred significant interest in exploring the causes of this phenomenon. From today's viewpoint, it is well-known that many graphs have a \emph{heavy-tailed} degree distribution, which may be close to a power-law or a log-normal or a combination of these distributions (see \cite{mitzenmacher2004brief} and references therein).

A third distinctive feature of large real-world graphs is the appearance \emph{clustering} \cite{newman2003social, serrano2006clustering,watts1998}. The network average of the probability that two neighbors of a random vertex are also directly connected is called the clustering coefficient. Measured clustering coefficients for social networks are typically tens of percent, and similar values have been measured for many other networks as well, including technological and biological ones.

\vspace{-10pt}\paragraph{Models of large networks.} Perhaps the first step towards a random graph model for real-world networks was made by Watts and Strogatz \cite{watts1998} in 1998, who addressed the small-world phenomenon and clustering and gave reasons for its emergence. However, the degree distribution of the generated graphs follows a Poisson distribution, and thus is not heavy tailed. Barab\'asi and Albert proposed \cite{barabasi1999emergence} that the cause for power-law degree distributions is preferential attachment: the networks evolve continuously by the addition of new vertices, and each new vertex chooses its neighbors with a probability that is proportional to their current degree. This model was shown by Bollob\'{a}s et al.~\cite{bollobas2001degree} to produce power-law degree distributions, but on the other hand it generates graphs that typically have a vanishing clustering coefficient \cite{bollobas2002mathematical}. Nevertheless, the Barab\'asi-Albert model was the beginning of a vast series of proposed models that suggested mechanisms according to which a network can evolve (see e.g. \cite{aiello2001random, borgs2007first, buckley2004popularity, chierichetti2009models, cooper2003general, lattanzi2009affiliation,leskovec2005graphs} for a non-exhaustive but representative list).

\vspace{-10pt}\paragraph{Hyperbolic random graphs.} 
An alternative and fruitful approach towards understanding the structure and the dynamics of real-world networks is to attempt to describe the similarities or dissimilarities between vertices in a well-defined and formal sense. One possibility in this direction is based on the idea of assigning \emph{virtual coordinates} to the vertices, i.e., the network is embedded in some metric space such that the mutual distances abstract the resemblance among the vertices.

One natural choice for the underlying metric space is the Euclidean space. In this context, Ng et al.~\cite{ng2002} proposed to embed the Internet graph into such spaces. Their original aim was to predict distances in the network by simply comparing coordinates. The authors obtained a reasonable mapping in 5 and 7 dimensions, but not without distortion and errors. Shavitt and Tankel \cite{shavitt2004curvature} later observed that this embedding becomes dramatically better when replacing the Euclidean geometry with a negatively curved hyperbolic space.

The above considerations lead us immediately to the model of \emph{Random Geometric Graphs}. Such a graph is generated by placing independently and uniformly at random $n$ vertices in, say, $[0,1]^2$, and creating edges whenever the (Euclidean) distance of two vertices is at most some $r = r(n)$. These graphs have been studied intensively by many authors because of connections to percolation, statistical physics, hypothesis testing, and cluster analysis~\cite{penrose2003random}. Unfortunately, these results provide strong evidence that Euclidean geometry is not the adequate choice if one wants to describe large real-world networks, as the qualitative characteristics of the resulting random networks (like the average path length or the degree sequence) are very far from the ones observed in practice. In other words, the underlying geometry capturing the main structural characteristics of real-world networks is not Euclidean, and the important question is whether there exists an appropriate choice of a geometry giving rise to the observed features.

A preliminary answer to this question was given by Papadopoulos, Krioukov, Bogu\~{n}\'{a} and Vahdat~\cite{papadopoulos2010greedyforwarding}.
The authors demonstrated impressively that complex scale-free network topologies with high clustering
coefficients emerge naturally from hyperbolic metric spaces.  Their model, which we will denote by \emph{random hyperbolic graph}, consists in its simplest variant of the uniform distribution of \(n\) vertices within a disk of radius \(R = R(n)\) in the hyperbolic plane, where two vertices are connected if their hyperbolic distance is at most \(R\). The authors show via simulations and some preliminary theoretical analysis that the generated graphs exhibit a power-law degree distribution, whose exponent can be tweaked via model parameters. Further, the authors indicate that with a slightly more complex model they can also control the clustering of the generated graphs to bring it in line with real-world networks.

To make their case, Bogu\~{n}\'{a}, Papadopoulos and Krioukov computed in~\cite{boguna2010sustaining} an embedding of the Internet graph into the hyperbolic plane by finding the maximum likelihood match to the model that was described above, and demonstrated impressively that this embedding has many desirable properties. For example, the authors examined the performance of greedy routing using the hyperbolic coordinates, i.e.\ the scheme in which each node forwards an incoming message to a neighbor that is closest to the destination, see also the works of Kleinberg~\cite{kleinberg2007geographic} and Papadimitriou et al.~\cite{papadimitriou2005conjecture}. In their embedding, this simple greedy forwarding strategy exhibits a remarkably strong performance and connects 97\% of all vertex pairs. The average stretch factor between chosen and optimal path is around 1.1, suggesting that greedy paths are very close to optimal. They also showed that this performance remains strong even if a fraction of the nodes is allowed to fail. 

\vspace{-10pt}\paragraph{Our contribution.}
Regarding the experiments just described, it seems at least fair to say that random hyperbolic graphs provide an attractive model that has a high potential of being adequate for describing the characteristics of many real-world networks. 
Moreover, a simple formulation and a strong affinity to random geometric graphs indicate that this model might be mathematically tractable. In this work we show that this is indeed the case and initiate thereby the rigorous study of hyperbolic random graphs. First, we prove a constant lower bound on the clustering coefficient of hyperbolic random graphs which confirms the claimed high clustering. We then show that the expected degree distribution indeed follows a power-law \emph{across all scales}, i.e., even up to the \emph{maximum degree}. Note that in the seminal papers \cite{papadopoulos2010greedyforwarding} and \cite{krioukov2010hyperbolic} the degree distribution was also considered, however only for constant degrees and without any error guarantees. In addition, we prove small probabilities for large deviations, i.e.\ we show that sampling from this distribution returns with high probability a graph with the desired properties, which is crucial for validating experimental results. We also compute tight bounds for the average and maximum degree that hold with high probability.

There are many models for which either a power-law degree sequence \cite{aiello2001random, bollobas2001degree, buckley2004popularity} or a large clustering coefficient \cite{watts1998} has been proven. But this is the first model which provably satisfies both properties. 
Note also that while there are some models (see for example~\cite{bollobas2001degree} and~\cite{buckley2004popularity}) for which a power-law degree distribution up to polynomially large degrees can be showed, to the best of our knowledge this is the first rigorous proof that the degree distribution of a random graph model is scale-free \emph{up to the maximum degree}. Further, our results reveal some fundamental combinatorial properties of the model, thus setting the groundwork for further theoretical investigations. We strongly believe that these facts together with the nice combinatorial structure of the model make it attractive for the theoretical computer science and random graph community.

\section{Model \& Results}
Let us begin this section with a few facts about the geometry of hyperbolic planes. We will restrict ourselves to the most basic notions, and refer the reader to e.g.~\cite{book:a05} and many references therein for an extensive introduction.

First of all, there are many equivalent representations of the hyperbolic plane, each one highlighting different aspects of the underlying geometry. We will consider here the so-called~\emph{native} representation, which was described by~Papadopoulos et.\ al.\ in~\cite{papadopoulos2010greedyforwarding}, as it is most convenient for defining the model of random hyperbolic graphs.

One basic feature of the hyperbolic plane is that it is isotropic, meaning that the geometry is the same regardless of direction. In other words, we can distinguish an arbitrary point, which we  call the~\emph{center} or the \emph{origin}. In the native representation of the hyperbolic plane we will use polar coordinates~$(r, \theta)$ to specify the position of any vertex~$v$, where the radial coordinate~$r$ equals the hyperbolic distance of~$v$ from the origin. Given this notation, the distance~$d$ of two vertices with coordinates~$(r, \theta)$ and~$(r', \theta')$ can be computed by solving the equation
\begin{equation}
\label{eq:hyp_dist}
\cosh(d)=\cosh(r) \cosh(r')-\sinh(r)\sinh(r')\cos(\theta -\theta') ,
\end{equation}
where~$\cosh(x) = (e^x + e^{-x})/2$ and~$\sinh(x) = (e^x - e^{-x})/2$. For our purposes we will denote from now on by~$d(r,r',\theta- \theta')$ the solution of \eqref{eq:hyp_dist} for~$d$.

The crucial difference between the Euclidean and the hyperbolic plane is that the latter contains in a well-defined sense more ``space''. More specifically, a circle with radius~$r$ has in the Euclidean plane a length of~$2\pi r$, while its length in the hyperbolic plane is~$2\pi\sinh(r) = \Omega(e^r)$. In other words, a circle in the hyperbolic plane has a length that is exponential in its radius as opposed to linear.

Based on the above facts, the authors of~\cite{papadopoulos2010greedyforwarding} defined a model of random geometric graphs that in its simplest version consists of the uniform distribution of \(n\) points into a hyperbolic disk of radius \(R = R(n)\) around the origin.  Two points in this disk are connected by an edge only if they are at hyperbolic distance at most \(R\) from each other, as defined in~\eqref{eq:hyp_dist}. More precisely, note that the total area of a circle of radius $r$ equals
\begin{equation*}
  2\pi \int_{0}^r \sinh(t) dt = 2\pi (\cosh(r) - 1).
\end{equation*}
To choose the \(n\) points uniformly at random in the hyperbolic disk of radius \(R\) it suffices to choose for each polar coordinates \((r, \theta)\) such that \(\theta\) is chosen uniformly at random in the interval, say, \((-\pi,\pi]\) and its radial coordinate \(r\) is drawn according to the distribution with density function $\sinh(r)/(\cosh(R) - 1)$, where \(0\leq r \leq R\). To add flexibility to the model, the authors of~\cite{papadopoulos2010greedyforwarding} use a slightly different density function for the radial coordinate: \(\alpha \sinh(\alpha r)/(\cosh(\alpha R) - 1)\), where \(\alpha > 1/2\). For \(\alpha < 1\) this favors points closer to the center, while for \(\alpha > 1\) points with radius closer to \(R\) are favored. For \(\alpha = 1\) this corresponds to the uniform distribution.

Let us now proceed to a formal definition of the model. With all the above notation at hand, the random hyperbolic graph $G_{\alpha, C}(n)$ with $n$ vertices and parameters $\alpha$ and $C$ is defined as follows.
\begin{definition}[Random Hyperbolic Graph $G_{\alpha, C}(n)$]
\label{model:hyperbolic}
Let $\alpha > 1/2$, $C \in \mathbb{R}$, $n\in \mathbb{N}$, and set $R = 2\log n + C$. The random hyperbolic graph $G_{\alpha, C}(n)$ has the following properties.
\begin{itemize}
\item The vertex set $V$ of $G_{\alpha, C}(n)$ is $V = \{1, \dots, n\}$.
\item Every $v\in V$ is equipped with random polar coordinates $(r_v, \theta_v)$, where $r_v\in [0, R]$ has density $p(r):=\alpha\frac{\sinh(\alpha r)}{\cosh (\alpha R) -1}$ and $\theta_v$ is drawn uniformly from $[-\pi, \pi]$.
\item The edge set of $G_{\alpha, C}(n)$ is given by $\bigl\{\{u,v\}\subset \binom{V}2:  d(r_u,r_v, \theta_u - \theta_v )\le R\bigr\}$.
\end{itemize}
\end{definition}
The restrictions in the model parameters, especially the condition $\alpha > 1/2$ and the definition of $R$ will become clear in the sequel. Informally speaking, the choice of $R$ guarantees that the resulting graph has a bounded average degree (depending on $\alpha$ and $C$ only). If $\alpha \le 1/2$, then the degree sequence is so heavy tailed that this is impossible.

Let us mention at this point that in~\cite{papadopoulos2010greedyforwarding} an even more general model was also proposed. There, each pair of vertices is connected with a probability that may depend on the hyperbolic distance of those vertices. In particular, this probability is large if the vertices have distance $\le R$, and becomes quickly smaller when the distance is larger than $R$. We will not treat this model here.

Let us next describe the results that we show for $G_{\alpha, C}(n)$. First of all, we study the clustering of hyperbolic random graphs. The local clustering coefficient of a vertex $v$ is defined by 
\begin{equation}
\label{eq:def_local_clustering}
  \bar{c}_v =
    \begin{cases}
      0 & \text{ if \(\deg(v) < 2\),}\\
      \frac{\left|\{\{u_1, u_2\}\in E\mid u_1, u_2\in \Gamma(v)\} \right|}{\binom{\deg(v)}{2}}  & \text{else}
    \end{cases}
\end{equation}
where $\Gamma(v):=\{u\mid\{u,v\}\in E\}$. The global clustering coefficient of a graph $G=(V,E)$ is the average over all local clustering coefficients
\begin{equation}
\label{eq:def_global_clustering}
\overline{c}(G)=\frac{1}{n}\sum_{v\in V}\bar{c}_v.
\end{equation}
Our first theorem gives a constant lower bound on the global clustering coefficient which holds with high probability. 
\begin{theorem}
\label{thm:clustering}
Let $\alpha > 1/2$, $C\in\mathbb{R}$ and $\bar{c}=\bar{c}(G_{\alpha, C}(n))$. Then $\mathbb{E}[\overline{c}]=\Theta(1)$ and with high probability $\overline{c}=(1+o(1))\mathbb{E}[\overline{c}]$.
\end{theorem}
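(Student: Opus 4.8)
Since $\bar c_v\le 1$ for every vertex $v$, we have $\bar c\le 1$ deterministically, so $\mathbb{E}[\bar c]=O(1)$, and the content of the theorem is the lower bound $\mathbb{E}[\bar c]=\Omega(1)$ together with the concentration claim. By the vertex symmetry of $G_{\alpha,C}(n)$ we have $\mathbb{E}[\bar c]=\mathbb{E}[\bar c_v]$ for a fixed $v$, so it suffices to show $\mathbb{E}[\bar c_v]=\Omega(1)$. The plan is to condition on $r_v$ lying in the outer annulus $I:=[R-K,R]$, where $K$ is a large absolute constant to be fixed later. Since $r_v$ has density $p$, one computes $\Pr[r_v\in I]=1-e^{-\alpha K}+o(1)=\Omega(1)$, so it is enough to prove that $\mathbb{E}[\bar c_v\mid r_v=r]=\Omega(1)$ \emph{uniformly} over $r\in I$.

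The geometric heart of the argument is the observation that \emph{sufficiently deep common neighbours of a near-boundary vertex are automatically adjacent to each other}: there is a constant $\zeta=\zeta(\alpha)$ such that if $r\in I$ and $u_1,u_2$ are both neighbours of $v$ with radii in $[K,r-\zeta]$, then $\{u_1,u_2\}\in E$. To see this I would first record the standard facts that (i) $d(r,r',\phi)$ is increasing in $\phi\in[0,\pi]$, so for $r+r'\ge R$ two vertices at radii $r,r'$ are adjacent iff their angular distance is at most $\vartheta(r,r'):=\arccos\!\big((\cosh r\cosh r'-\cosh R)/(\sinh r\sinh r')\big)$, and (ii) $\vartheta(r,r')=\Theta\big(e^{(R-r-r')/2}\big)$ with absolute implicit constants whenever $r+r'\ge R$ and $r,r'\ge\rho_0$ for a suitable absolute $\rho_0$. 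Taking $K\ge\rho_0$, both $v$--$u_i$ adjacencies are then governed by $\vartheta$, the angular distance between $u_1$ and $u_2$ is at most $\vartheta(r,r_{u_1})+\vartheta(r,r_{u_2})=O\big(e^{(R-r-r_{u_1})/2}+e^{(R-r-r_{u_2})/2}\big)$, while the threshold for $u_1\sim u_2$ is $\Omega\big(e^{(R-r_{u_1}-r_{u_2})/2}\big)$ (and if instead $r_{u_1}+r_{u_2}<R$ then $u_1,u_2$ are unconditionally adjacent); dividing the last two displays, the former is at most a constant multiple of $e^{-\zeta/2}$ times the latter, which is $\le$ it once $\zeta$ is a large enough constant.

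Granting the observation, the lower bound follows from first-moment estimates. Let $X_v$ denote the number of edges inside $\Gamma(v)$. By the observation and the conditional independence of distinct vertices given $r_v$, for $r\in I$ one has $\mathbb{E}[X_v\mid r_v=r]\ge\binom{n-1}{2}q(r)^2$, where $q(r):=\int_K^{r-\zeta}p(r')\,\vartheta(r,r')/\pi\,dr'$. Using $p(r')=\Theta\big(\alpha e^{-\alpha(R-r')}\big)$ and $\vartheta(r,r')=\Theta\big(e^{(R-r-r')/2}\big)$, the integrand equals, up to $r'$-independent factors, $\Theta\big(e^{(\alpha-1/2)r'}\big)$, which is increasing since $\alpha>1/2$; hence $q(r)=\Theta\big(e^{(\alpha-1/2)(r-R)}e^{-r/2}\big)=\Theta(1/n)$ uniformly on $I$ (there $e^{-r/2}=\Theta(e^{-R/2})=\Theta(1/n)$), and therefore $\mathbb{E}[X_v\mid r_v=r]=\Theta(1)$. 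An analogous computation gives $\mathbb{E}[\deg v\mid r_v=r]=\Theta(1)$ uniformly on $I$ (the contribution of neighbours of radius below $K$ is $O(n^{1-2\alpha})=o(1)$, which is where $\alpha>1/2$ enters). Conditioned on $r_v=r$, $\deg v$ is a sum of independent indicators with bounded mean $\mu$, so $\Pr[\deg v=s\mid r_v=r]\le\mu^s/s!$ and hence $\mathbb{E}\big[\binom{\deg v}{2}\mathbf{1}[\deg v>D]\mid r_v=r\big]\to0$ as $D\to\infty$, uniformly on $I$. On $\{\deg v\le D\}$ we have $\bar c_v\ge X_v/\binom{D}{2}$ (both sides vanish if $\deg v<2$), so using $X_v\le\binom{\deg v}{2}$,
\[
  \mathbb{E}[\bar c_v\mid r_v=r]\;\ge\;\frac{1}{\binom{D}{2}}\Big(\mathbb{E}[X_v\mid r_v=r]-\mathbb{E}\big[\tbinom{\deg v}{2}\mathbf{1}[\deg v>D]\mid r_v=r\big]\Big)\;=\;\frac{1}{\binom{D}{2}}\big(\Theta(1)-o_D(1)\big),
\]
which is $\Omega(1)$ once $D$ is a large enough constant. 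Integrating over $r\in I$ and multiplying by $\Pr[r_v\in I]=\Omega(1)$ yields $\mathbb{E}[\bar c_v]=\Omega(1)$, hence $\mathbb{E}[\bar c]=\Theta(1)$. I expect the main obstacle to be the geometric bookkeeping: establishing (ii) with explicit constants, pinning down $\zeta$, and ensuring that the estimates for $q(r)$ and $\mathbb{E}[\deg v\mid r_v=r]$ are uniform over all of $I$, since the distance formula \eqref{eq:hyp_dist} behaves qualitatively differently in the regimes $r+r'<R$ and $r+r'\ge R$ and near versus far from the origin.

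For the concentration statement I would apply the Efron--Stein inequality with the $n$ point positions as the independent coordinates. Resampling the position of a vertex $i$ affects $\bar c_w$ only for $w\in\{i\}\cup\Gamma(i)\cup\Gamma'(i)$, where $\Gamma(i),\Gamma'(i)$ are the neighbourhoods of $i$ before and after the resampling: indeed $\bar c_w$ is a function of the subgraph induced on $\{w\}\cup\Gamma(w)$ and only edges at $i$ are moved. Since each $\bar c_w$ changes by at most $1$, we get $|\bar c-\bar c^{(i)}|\le(1+d_i+d_i')/n$ with $d_i,d_i'$ the degrees of $i$ before and after; feeding this into Efron--Stein and using $d_i'\stackrel{d}{=}d_i\stackrel{d}{=}\deg v$ gives $\mathrm{Var}(\bar c)=O(\mathbb{E}[\deg^2]/n)$. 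Finally $\mathbb{E}[\deg^2]=O(1)+\Theta(n^2)\,\mathbb{E}_{r_v}\!\big[\Pr[u\sim v\mid r_v]^2\big]$, and since $\Pr[u\sim v\mid r_v=r]=\Theta(e^{-r/2})$ a direct computation of the same type as for the degree sequence shows $\mathbb{E}[\deg^2]=O(1)$ if $\alpha>1$, $O(\log n)$ if $\alpha=1$, and $O(n^{2-2\alpha})$ if $1/2<\alpha<1$ --- in every case $o(n)$, again using $\alpha>1/2$. Thus $\mathrm{Var}(\bar c)=o(1)$, and since $\mathbb{E}[\bar c]=\Theta(1)$, Chebyshev's inequality gives $\bar c=(1+o(1))\mathbb{E}[\bar c]$ with high probability.
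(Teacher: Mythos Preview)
Your proposal is correct but follows a genuinely different route from the paper, both for the lower bound and for the concentration.

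For the lower bound, the paper works on the annulus $O(\beta)=B_0(R)\setminus B_0(\beta R)$ with $\beta=2/3$ and lower bounds $\mathbb{E}[\bar c_v\mid r_v=r]$ for \emph{all} $r\in[\beta R,R]$ by an explicit double--ball--intersection estimate (their Lemma on $\mu(B_0(R)\cap B_{r_1,0}(R)\cap B_{r_2,\theta}(R)\setminus B_0(x))$), followed by an integration over~$r$. You instead restrict to a constant-width shell $[R-K,R]$ and replace the intersection lemma by the cleaner geometric fact that any two neighbours of a near-boundary vertex with radii in $[K,r-\zeta]$ are automatically adjacent, which makes the ``triangle count'' a pure square $\binom{n-1}{2}q(r)^2$. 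Your truncation $\bar c_v\ge X_v/\binom{D}{2}$ on $\{\deg v\le D\}$ then converts this into a uniform lower bound on $\mathbb{E}[\bar c_v\mid r_v=r]$. The paper's computation gives more explicit (albeit loose) constants; yours is shorter and conceptually more transparent.

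For concentration, the difference is more substantial. The paper applies an Azuma--Hoeffding inequality with a \emph{bad event} (some outer vertex has degree $>cn^{1-\beta}$), which is what forces the inner/outer decomposition and the auxiliary lemma bounding $\Pr[\mathcal B]$; the Lipschitz constant is then $cn^{1-\beta}$ and one gets an exponentially small failure probability. You avoid all of this by using Efron--Stein: resampling the $i$th position changes $n\bar c$ by at most $1+d_i+d_i'$, so $\mathrm{Var}(\bar c)=O(\mathbb{E}[\deg^2]/n)$, and a direct second-moment computation gives $\mathbb{E}[\deg^2]=O(1)$, $O(\log n)$, or $O(n^{2-2\alpha})$ according as $\alpha>1$, $\alpha=1$, or $1/2<\alpha<1$ --- in every case $o(n)$. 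This is more elementary and sidesteps the bad-event machinery, at the price of yielding only Chebyshev-type concentration rather than an exponential tail; since the theorem only asks for ``with high probability'', that is enough.
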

Then we study the degree sequence, and provide sharp bounds for the number of vertices of degree $k$.
\begin{theorem}
\label{thm:degree_seq}
Let $\alpha > 1/2$ and $C\in\mathbb{R}$. Set $\delta = \min \left\{{\frac{(2\alpha-1)}{4(2\alpha+1)\alpha}}, {\frac{2(2\alpha-1)}{5(2\alpha+1)}} \right\}$. Then, with high probability, for all $0 \le k \le n^{\delta'}$, where $\delta'<\delta$, the fraction of vertices of degree exactly $k$ in $G_{\alpha, C}(n)$ is 
\begin{equation}
\label{eq:theorem1_part1}
\bigl(1+o(1)\bigr)\frac{2\alpha e^{-\alpha C}}{k!} \Bigl(\frac{2\alpha}{\pi(\alpha-1/2)}\Bigr)^{2\alpha}\left( \Gamma(k-2\alpha)-\int_{0}^{\xi} t^{k-2\alpha-1}e^{-t}dt  \right),
\end{equation}
where 
$\Gamma(x) = \int_{0}^\infty t^{x-1}e^{-t}dt$ denotes the Gamma function and $\xi=\frac{2\alpha}{\pi(\alpha - 1/2)}e^{-C/2}$.
If $n^{\delta}\le k\le \frac{n^{1/2\alpha}}{\log n}$ then with high probability the fraction of vertices of degree at least $k$ in $G_{\alpha, C}(n)$ is
\begin{equation}
\label{eq:theorem1_part2}
(1+o(1))\left(\frac{2\alpha}{\pi(\alpha-1/2)} \right)^{2\alpha}e^{-\alpha C}k^{-2\alpha} .
\end{equation}
\end{theorem}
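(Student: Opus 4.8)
The plan is to compute, for a fixed vertex, the probability that it has degree exactly $k$ (respectively at least $k$) as a function of its radial coordinate $r$, and then integrate against the density $p(r)$. The starting point is the well-known geometric fact (already implicit in the model description) that a vertex at radius $r$ is connected precisely to those vertices lying in a certain lune-shaped region around it; a standard computation with~\eqref{eq:hyp_dist} shows that a vertex at radius $r'$ is a neighbor iff its angular distance from our vertex is at most $\theta_R(r,r')$, where $\cos\theta_R(r,r') = \frac{\cosh r\cosh r' - \cosh R}{\sinh r \sinh r'}$, and one derives the asymptotic $\theta_R(r,r') = 2e^{(R-r-r')/2}(1+\Theta(e^{R-r-r'}))$ in the relevant range. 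Integrating the density over this region gives the expected number $\mu(r)$ of neighbors of a vertex at radius $r$; the key estimate will be $\mu(r) = (1+o(1))\,\frac{2\alpha}{\pi(\alpha-1/2)}\,e^{(R-r)/2}$ for $r$ not too close to $R$, together with appropriate bounds near the boundary. First I would prove these two geometric/measure lemmas carefully, isolating the error terms, since everything downstream depends on them.

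Next, conditioned on its radius $r$, the degree of a vertex is a sum of $n-1$ independent indicator variables (one per other vertex), so it is a Binomial-type random variable with mean $\mu(r)$; I would show it is well-approximated by $\mathrm{Poisson}(\mu(r))$ — e.g.\ via a direct Poissonization/coupling argument or Le~Cam-type bounds — so that $\Pr[\deg(v)=k \mid r_v = r] = (1+o(1))\,e^{-\mu(r)}\mu(r)^k/k!$ uniformly in the range of $k$ under consideration. Then the expected fraction of degree-$k$ vertices is $\int_0^R p(r)\,\Pr[\deg(v)=k\mid r_v=r]\,dr$. Substituting the asymptotics for $p(r)$ (which is $(1+o(1))\,\frac{\alpha}{2}e^{\alpha(r-R)}$ for $r$ bounded away from $0$) and for $\mu(r)$, and making the change of variables $t = \mu(r)$ (so $r = R - 2\log(t\cdot\frac{\pi(\alpha-1/2)}{2\alpha})$ up to lower-order terms, and $dr = -\frac{2}{t}dt$), the integral becomes, after collecting the constants and using $R = 2\log n + C$, a constant times $\frac{e^{-\alpha C}}{k!}\int_\xi^{\text{(large)}} t^{k-2\alpha-1}e^{-t}\,dt$, which is exactly $\frac{e^{-\alpha C}}{k!}\bigl(\Gamma(k-2\alpha) - \int_0^\xi t^{k-2\alpha-1}e^{-t}\,dt\bigr)$ up to the contribution of the upper cutoff and the boundary region $r\approx R$, which I would bound separately and show is negligible for $k \le n^{\delta'}$. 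For the tail statement~\eqref{eq:theorem1_part2}, when $k = n^{\Omega(1)}$ the Poisson weight $e^{-\mu(r)}\mu(r)^k/k!$ is sharply concentrated around $\mu(r)\approx k$, i.e.\ around a single radius $r_k$, so $\Pr[\deg(v)\ge k\mid r]$ behaves like a step function in $r$ and the integral $\int p(r)\,\mathbf{1}[\mu(r)\gtrsim k]\,dr$ evaluates directly to $(1+o(1))\bigl(\frac{2\alpha}{\pi(\alpha-1/2)}\bigr)^{2\alpha}e^{-\alpha C}k^{-2\alpha}$ by plugging $r_k$ into the tail of the radial distribution.

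Finally, to upgrade the expectations to high-probability statements I would use concentration. Let $N_{\ge k}$ be the number of vertices of degree at least $k$ (and similarly $N_k$ for exactly $k$). These are not sums of independent variables, but changing the position of a single vertex changes $N_k$ by at most its degree plus one, so a bounded-differences inequality (Azuma--McDiarmid) conditioned on the high-probability event that all degrees are at most $\mathrm{polylog}(n)\cdot n^{1/2\alpha}$ (which itself follows from the tail estimate and a union bound) gives concentration of $N_{\ge k}$ around its mean up to $o(1)$ relative error whenever the mean is $\omega(\mathrm{polylog})$; for the smaller-degree regime where $\EE[N_k]$ is linear in $n$ this is easy, and for $k$ close to $n^{\delta'}$ one checks $\EE[N_k] = n^{\Omega(1)}$ so the relative error is still $o(1)$. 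Passing from "degree at least $k$" to "degree exactly $k$" in the first regime is a subtraction of two such concentrated quantities. The main obstacle I anticipate is the first regime's integral estimate: controlling the error terms uniformly over the whole range $0\le k\le n^{\delta'}$ — in particular the interplay between the $(1+o(1))$ in $\mu(r)$ (which gets exponentiated into $\mu(r)^k$, so a $(1+\epsilon)$ multiplicative error becomes $(1+\epsilon)^k$) and the size of $k$ — is exactly what forces the precise value of $\delta$, and getting that bookkeeping right is the technical heart of the proof.
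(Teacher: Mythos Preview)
Your computation of the expectation (the change of variables $t=\mu(r)$ turning the integral into an incomplete Gamma function) and your treatment of the large-$k$ regime (the step-function/threshold argument around the critical radius $r_k$) are exactly what the paper does. The genuine gap is in your concentration step for the first part.

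You propose to apply a bounded-differences inequality with Lipschitz coefficient equal to the (conditioned) maximum degree, which is $n^{1/(2\alpha)+o(1)}$. But then $\sum_i c_i^2 = n^{1+1/\alpha+o(1)}$, and to get $|N_k-\mathbb{E}[N_k]|=o(\mathbb{E}[N_k])$ you need a deviation $t$ satisfying both $t\gg n^{1/2+1/(2\alpha)}$ and $t=o(\mathbb{E}[N_k])$. Since $\mathbb{E}[N_k]\le n$, this is impossible whenever $\alpha\le 1$: even for constant $k$ the concentration fails entirely. (For $\alpha>1$ it works but gives a strictly smaller range of $k$ than the stated $\delta$.) There is also a subtlety you glossed over: changing the position of one vertex to the origin makes its degree $n-1$, so the Lipschitz constant for the unrestricted count $N_k$ is really $n$, not the typical maximum degree; conditioning on a ``bad event'' about the maximum degree does not straightforwardly exclude such moves in the bounded-differences framework.

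The paper's fix, which you are missing, is to restrict attention to an outer annulus $O(\beta)=\{r>\beta R\}$ with $\beta$ close to $1$, and to count $D_k(\beta)=$ the number of vertices in $O$ whose degree \emph{within $O$} equals $k$. On this restricted quantity the Lipschitz coefficient is only $O(n^{1-\beta})$ (every vertex in $O$ has $O$-degree $O(n^{1-\beta})$ with very high probability), which makes Azuma work for all $\alpha>1/2$. One then has to show separately that the restriction is harmless: the number of inner vertices and the number of inner--outer edges are both $o(\mathbb{E}[D_k])$ in the range $k\le n^{\delta'}$. Balancing the three constraints on $\beta$ --- the Lipschitz bound, the error terms in the expectation calculation, and the inner/outer edge count --- is precisely what produces the specific value of $\delta$ in the statement; it is this truncation-and-repair argument, not the $(1+o(1))^k$ issue you flagged, that drives the threshold.
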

Note that this result demonstrates that the degree sequence of $G_{\alpha, C}(n)$ is a power-law with exponent $2\alpha + 1 > 2$. To see this, note that for sufficiently large $k$ we have that $\Gamma(k - 2\alpha)/k! = \Theta(k^{-2\alpha -1})$, i.e., \eqref{eq:theorem1_part1} and \eqref{eq:theorem1_part2} imply that the number of vertices of degree $k$ in $G_{\alpha, C}(n)$ is $(1+o(1)) c_{\alpha, C} k^{-2\alpha - 1}n$, for an appropriate $c_{\alpha, C} > 0$.


Our next result gives bounds for the average degree of $G_{\alpha, C}(n)$.
\begin{theorem}
\label{thm:average_degree}
Let $\alpha > 1/2$ and $C\in\mathbb{R}$. Then the average degree of $G_{\alpha, C}(n)$ is $(1+o(1))\frac{2\alpha^2e^{-C/2}}{\pi (\alpha -1/2)^2}.$
\end{theorem}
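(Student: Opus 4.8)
The plan is to compute the expected degree of a ``typical'' vertex exactly via an integral, show it concentrates, and then sum. Fix a vertex $v$ and condition on its radial coordinate $r_v = r$. A second vertex $u$, with independent coordinates $(r_u, \theta_u)$, is a neighbour of $v$ precisely when $d(r, r_u, \theta_u) \le R$. By the symmetry of $\theta_u$, the conditional probability that $u \sim v$ equals $\EE_{r_u}\bigl[\frac{1}{\pi}\theta_R(r, r_u)\bigr]$, where $\theta_R(r, r')$ denotes the maximal angular difference $\theta$ for which $d(r,r',\theta)\le R$; solving \eqref{eq:hyp_dist} for the threshold gives the closed form
\[
\cos\theta_R(r,r') = \frac{\cosh r\cosh r' - \cosh R}{\sinh r \sinh r'} ,
\]
valid when the right-hand side lies in $[-1,1]$ (otherwise $\theta_R$ is $0$ or $\pi$). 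So $\EE[\deg(v)\mid r_v=r] = (n-1)\int_0^R \frac{1}{\pi}\theta_R(r,r')\,p(r')\,dr'$, and $\EE[\deg(v)] = (n-1)\int_0^R\int_0^R \frac{1}{\pi}\theta_R(r,r')\,p(r)\,p(r')\,dr\,dr'$.

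Next I would evaluate this double integral asymptotically. The standard approach, which should be set up carefully once and reused for Theorem~\ref{thm:degree_seq}, is to use the approximations $p(r) = \alpha e^{-\alpha(R-r)}(1+o(1))$ for $r$ not too small, and the key estimate that for $r+r' > R$ (the regime that dominates, since $R = 2\log n + C$ is large) one has $\theta_R(r,r') = (1+o(1))\,2 e^{(R - r - r')/2}$; when $r + r' \le R$ one has $\theta_R = \pi$, but the contribution of that corner is exponentially negligible after multiplying by $p(r)p(r')$. Substituting and changing variables (e.g. $x = R-r$, $y = R-r'$), the integral becomes, up to $1+o(1)$,
\[
\frac{n-1}{\pi}\int_0^R\int_0^R 2 e^{(R-r-r')/2}\,\alpha^2 e^{-\alpha(R-r)}e^{-\alpha(R-r')}\,dr\,dr'
= \frac{2(n-1)\alpha^2}{\pi}e^{-R/2}\Bigl(\int_0^\infty e^{-(\alpha - 1/2)x}\,dx\Bigr)^2(1+o(1)),
\]
where extending the limits to $\infty$ costs only $o(1)$ because $\alpha > 1/2$ makes the tails converge — this is exactly where the hypothesis $\alpha > 1/2$ is used. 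The inner integrals equal $1/(\alpha-1/2)$ each, and $e^{-R/2} = e^{-C/2}/n$, so the whole expression is $(1+o(1))\,\frac{2\alpha^2 e^{-C/2}}{\pi(\alpha-1/2)^2}$, as claimed. One must also check that the small-$r$ region (where the approximation $p(r)\approx\alpha e^{-\alpha(R-r)}$ fails and $\theta_R$ may be $\pi$) contributes only $o(1)$ to the average: there $p(r)$ is of order $e^{-\alpha R} = n^{-2\alpha}e^{-\alpha C}$ times a bounded factor, and $\theta_R \le \pi$, so even after the factor $n$ this is $O(n^{1-2\alpha}) = o(1)$ since $\alpha > 1/2$.

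Finally, to pass from the expectation to a high-probability statement I would use concentration. The average degree is $\frac{1}{n}\sum_{v}\deg(v) = \frac{2}{n}|E|$, so it suffices to show $|E|$ concentrates around its mean. The coordinates of the $n$ vertices are independent, and $|E|$ changes by at most $n-1$ when one vertex is resampled, but a direct Azuma bound with that Lipschitz constant is too weak; instead I would either (i) bound the maximum degree by $O(n^{1/(2\alpha)}\log n)$ with high probability using Theorem~\ref{thm:degree_seq} (or a direct Chernoff bound on $\deg(v)$ conditioned on $r_v$), condition on this event, and then apply a bounded-differences inequality with Lipschitz constant $O(n^{1/(2\alpha)}\log n)$, which gives concentration up to an additive $n^{1/(2\alpha)+o(1)}\sqrt n = o(n)$ deviation; or (ii) compute $\mathrm{Var}(|E|)$ directly — the edge indicators are pairwise ``almost independent'' (two edges sharing no vertex are independent, and there are only $O(n^3)$ pairs sharing a vertex, each contributing $O(1)$ after accounting for the typical degree being $O(1)$), yielding $\mathrm{Var}(|E|) = O(n \cdot \overline{d}^{\,2} + n^2) $ — wait, more carefully $\mathrm{Var}(|E|) = O(\EE|E| + n\cdot\EE[\deg^2])$, and since $\EE[\deg(v)^2] = O(1)$ for $\alpha>1$ but can grow for $1/2<\alpha\le 1$, one gets $\mathrm{Var}(|E|) = O(n^{2-1/(2\alpha)+o(1)}) = o(n^2)$, so Chebyshev gives the result. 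I expect the main obstacle to be this concentration step in the regime $1/2 < \alpha \le 1$, where individual degrees are not bounded in expectation and a naive second-moment or martingale argument loses too much; the cleanest route is to truncate the degrees using the high-probability maximum-degree bound from Theorem~\ref{thm:degree_seq} and argue that the truncated and untruncated sums agree with high probability.
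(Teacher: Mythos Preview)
Your expectation computation is essentially the paper's approach. The paper first packages the inner integral as Lemma~\ref{lem:intersection_area}, obtaining
\[
\overline{k}(r)=(n-1)\,\mu(B_r(R)\cap B_0(R))=(n-1)\,\frac{2\alpha e^{-r/2}}{\pi(\alpha-1/2)}\bigl(1\pm O(e^{-(\alpha-1/2)r}+e^{-r})\bigr),
\]
and then integrates $\overline{k}(r)p(r)$ over $0\le r\le R$; you instead write out the full double integral in $(r,r')$ directly. Both routes use exactly the same two ingredients---the estimate $\theta_R(r,r')\approx 2e^{(R-r-r')/2}$ (the paper's Lemma~\ref{lem:theta}) and the approximation $p(r)\approx \alpha e^{-\alpha(R-r)}$---and both reduce to the same separable integral $\int_0^\infty e^{-(\alpha-1/2)x}\,dx=1/(\alpha-1/2)$. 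The paper's version has the mild advantage that the error bookkeeping is already done inside Lemma~\ref{lem:intersection_area}, so the final integration is cleaner; your version is slightly more self-contained.

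The concentration step you spend the last paragraph on is \emph{not} in the paper: the paper's proof of Theorem~\ref{thm:average_degree} is purely an expectation calculation, and the statement is to be read as a statement about the expected average degree (equivalently, the expected degree of a fixed vertex). So everything after ``Finally, to pass from the expectation to a high-probability statement'' is extra, and the exploratory second-moment discussion (with its mid-sentence self-correction) can simply be dropped. If you do want a w.h.p.\ statement, your option~(i)---condition on the high-probability maximum-degree bound of Theorem~\ref{thm:max_degree} and apply a bounded-differences inequality with Lipschitz constant $n^{1/(2\alpha)+o(1)}$---is the right instinct and would work.
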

Note that Theorem~\ref{thm:degree_seq} and Theorem~\ref{thm:average_degree} confirm the results in~\cite{papadopoulos2010greedyforwarding}.
Finally, we give sharp bounds for the maximum degree in $G_{\alpha, C}(n)$.
\begin{theorem}
\label{thm:max_degree}
Let $\alpha > 1/2$ and $C\in\mathbb{R}$. Then the maximum vertex degree of $G_{\alpha, C}(n)$ is with high probability  $n^{\frac{1}{2\alpha}+o(1)}$.
\end{theorem}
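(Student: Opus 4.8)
The plan is to reduce Theorem~\ref{thm:max_degree} to two ingredients. \emph{Ingredient~(i):} conditioned on a vertex $v$ having radial coordinate $r_v=r$, the degree of $v$ is \emph{exactly} distributed as $\Bin\!\bigl(n-1,\mu(r)\bigr)$, where $\mu(r)$ is the $p$-measure of a hyperbolic disc of radius $R$ whose centre lies at distance $r$ from the origin; indeed, conditioning in addition on $\theta_v$ — which is irrelevant, by rotational invariance — the $n-1$ neighbour indicators of $v$ become i.i.d.\ Bernoulli$(\mu(r))$. \emph{Ingredient~(ii):} the two-sided estimate $\mu(r)=\Theta\!\bigl(e^{-r/2}\bigr)$ throughout the window of radii that matters, together with the crude global bound $\mu(r)\le C e^{-r/2}$ valid for every $r$ (with $C=C(\alpha)$); this is exactly the type of estimate on the measure of balls in the hyperbolic plane that drives the proofs of Theorems~\ref{thm:degree_seq} and~\ref{thm:average_degree}. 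Since $\mu$ decays like $e^{-r/2}$, the vertices of largest degree are those of smallest radius, and as the minimum radial coordinate concentrates around $(2-\tfrac1\alpha)\log n$ one has $n\,\mu\!\bigl((2-\tfrac1\alpha)\log n\bigr)=n^{1/(2\alpha)+o(1)}$, which is the heuristic behind the claimed value.

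\emph{Lower bound.} This I would read off directly from Theorem~\ref{thm:degree_seq}. Set $k_n:=n^{1/(2\alpha)}/(\log n)^2$; for $n$ large it lies in the admissible window $[n^{\delta},\,n^{1/(2\alpha)}/\log n]$, since $(\log n)^2\ge\log n$ gives $k_n\le n^{1/(2\alpha)}/\log n$ and $\delta<1/(2\alpha)$ gives $k_n\ge n^{\delta}$. By~\eqref{eq:theorem1_part2}, with high probability the number of vertices of degree at least $k_n$ equals $(1+o(1))\bigl(\tfrac{2\alpha}{\pi(\alpha-1/2)}\bigr)^{2\alpha}e^{-\alpha C}\,k_n^{-2\alpha}n$, and since $k_n^{-2\alpha}n=(\log n)^{4\alpha}$ this quantity tends to infinity; in particular it is at least $1$ with high probability, so the maximum degree $\Delta$ of $G_{\alpha,C}(n)$ satisfies $\Delta\ge k_n=n^{1/(2\alpha)+o(1)}$ with high probability.

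\emph{Upper bound.} For the matching upper bound I would run a union bound over the $n$ vertices, each term controlled by a Chernoff estimate. Fix $D:=n^{1/(2\alpha)}\log n$ and $\rho:=(2-\tfrac1\alpha)\log n-\log\log n$. Consider a single vertex $v$ and condition on $r_v=r$. From the explicit radial law, $\PP[r_v\le\rho]=\frac{\cosh(\alpha\rho)-1}{\cosh(\alpha R)-1}=O\!\bigl(n^{-1}(\log n)^{-\alpha}\bigr)$, so the event $\{r\le\rho\}$ contributes at most $O\!\bigl(n^{-1}(\log n)^{-\alpha}\bigr)$ to $\PP[\deg(v)\ge D]$. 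On $\{r>\rho\}$ one has $\mu(r)\le C e^{-r/2}\le C e^{-\rho/2}=O\!\bigl(n^{-1+1/(2\alpha)}\sqrt{\log n}\bigr)$, so by Ingredient~(i) the degree of $v$ is stochastically dominated by a binomial of mean $O\!\bigl(n^{1/(2\alpha)}\sqrt{\log n}\bigr)=o(D)$, whence a Chernoff bound yields $\PP[\deg(v)\ge D\mid r_v=r]\le e^{-D}$. Summing the two contributions over all $n$ vertices gives $\PP[\Delta\ge D]\le O\!\bigl((\log n)^{-\alpha}\bigr)+n e^{-D}=o(1)$, so with high probability $\Delta\le D=n^{1/(2\alpha)+o(1)}$; combined with the lower bound this proves the theorem.

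\emph{Main obstacle.} Once Ingredient~(ii) is granted, everything above is a routine first-moment / Chernoff computation, so the genuine content is precisely that estimate: establishing $\mu(r)=\Theta(e^{-r/2})$ (ideally with the sharp constant $\tfrac{2\alpha}{\pi(\alpha-1/2)}$) uniformly over the relevant range of $r$ — the very estimate already required for Theorems~\ref{thm:degree_seq} and~\ref{thm:average_degree}. If one preferred to prove the lower bound without quoting Theorem~\ref{thm:degree_seq}, the delicate point would be that the vertex of smallest radius is itself random, so conditioning on its radius being small biases the \emph{other} vertices' radii slightly outward; one checks that this perturbs their connection probabilities by only an additive $\mathrm{polylog}(n)/n$, negligible next to $\mu(\rho)$, so a Chernoff bound on the resulting dominated binomial still produces a vertex of degree $n^{1/(2\alpha)+o(1)}$.
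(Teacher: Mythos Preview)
Your argument is correct. The upper bound is essentially the paper's own proof: split according to whether the radial coordinate lies below a threshold near $(2-\tfrac1\alpha)\log n$, use Lemma~\ref{lem:intersection_area} (your Ingredient~(ii)) on the complementary event, and finish with Chernoff plus a union bound. The parameter choices differ cosmetically (the paper uses a slowly growing window $\phi(n)=o(\log\log n)$ where you use $\log\log n$), but the mechanism is identical.

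The lower bound, however, is obtained differently. You invoke the second part of Theorem~\ref{thm:degree_seq} at $k_n=n^{1/(2\alpha)}/(\log n)^2$; the paper instead argues directly that with high probability some vertex has radius at most $r_0+\phi(n)$ and that its degree is therefore large. Your route is shorter but imports the heavier machinery behind Theorem~\ref{thm:degree_seq}; the paper's route is self-contained from Lemma~\ref{lem:intersection_area}. Regarding the conditioning subtlety you flag in your final paragraph: the paper sidesteps it not by bounding the bias, but by first showing that at most $\log n$ vertices land in $B_0(r_0+\phi(n))$ and then counting only neighbours \emph{outside} that inner disc, i.e.\ using $\mu\bigl(B_0(R)\cap B_{r_0+\phi(n)}(R)\setminus B_0(r_0+\phi(n))\bigr)$ and a $\Bin(n-\log n,\cdot)$ lower bound. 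That is a cleaner way to decouple the innermost vertex from the remaining $n-\log n$ points than the perturbation estimate you sketch.
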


In this work we focus on the degree distribution and the clustering of hyperbolic random graphs. We believe that the the diameter of the giant component and the performance of greedy routing using the hyperbolic coordinates are interesting questions for future work in this area.

\section{Properties of the Model}
\label{sec:properties}
Recall that according to Definition~\ref{model:hyperbolic} the mass of a point $p=(r, \theta)$ is $f(r)=\frac{\alpha \sinh(\alpha r)}{2\pi(\cosh(\alpha R)-1)}$, and does only depend on the radial coordinate of $p$. Accordingly, we define the probability measure $\mu(S)$ of a point set $S$ as
\begin{equation}
\label{eq:pointset_mu}
\mu(S)=\int_Sf(y)dy .
\end{equation}
A vertex located at $(\theta, r)$ is connected to all vertices with coordinates $(\theta', r')$ such that $d(r, r', \theta-\theta')\le R$. Let us define the \emph{ball} of radius $x$ around a point $(r, \theta)$ as 
\begin{equation}
\label{eq:ball}
B_{r, \theta}(x)=\left\{ (r', \theta') ~\big|~ d(r,r',\theta-\theta')\le x \right\}. 
\end{equation}
Since in the definition of our model two points are connected if and only if they are at distance at most $R$, we will typically consider the intersection $B_{r_1, \theta_1}(R)\cap B_{0,0}(R)$ which corresponds to the point set in which all vertices are connected to a fixed vertex at $(r_1, \theta_1)$.
By \eqref{eq:pointset_mu} we can determine the probability measure of such a set by integrating $f(y)$ over all points in the set. In our specific case we achieve this by integrating first over all $y\in [0,R]$ and then over all $\theta$ such that $d(r_1, y, \theta_1-\theta)\le R$. As $f(y)$ does not depend on $\theta$ we are only interested in the range of $\theta$ for which this inequality is satisfied. One extremal of $(\theta_1-\theta)$ for which it is satisfied is clearly
\begin{equation}
\label{eq:def_theta_y}
\theta_{r_1}(y)= \arg\max_{0\le \phi\le \pi}\left\{d(r_1, y, \phi)\le R \right\}  =\arccos\left(\frac{\cosh(r_1)\cosh(y)-\cosh(R)}{\sinh(r_1)\sinh(y)} \right).
\end{equation}
Because of symmetry of the cosine the other extremal is $-\theta_{r_1}(y)$ and we therefore have to integrate from $-\theta_{r_1}(y)$ to $\theta_{r_1}(y)$ as all those angles $\theta$ satisfy $d(r_1,y,\theta)\le R$. Therefore we arrive at the following expression:
\begin{equation}
\label{eq:ball_integral}
\mu(B_{r, \theta}(R)\cap B_{0,0}(R))=\int_0^R\int_{-\theta_r(y)}^{\theta_r(y)} f(y)d\theta dy=2\int_0^R\int_{0}^{\theta_r(y)} f(y)d\theta dy.
\end{equation}
Note that $\mu(B_{r, \theta}(x)\cap B_{0,0}(R))$ does not depend on $\theta$ and therefore we shorten it to $\mu(B_{r}(x)\cap B_{0}(R))$. Before we commence with the more technical part of this section, we quickly refresh the following basic estimates of $\cosh(x)$ and $\sinh(x)$. For all $x\geq 0$
\begin{equation}
\label{eq:approx_cosh}
\frac{e^{x}}{2}\le \cosh(x)\le e^x \quad \text{and} \quad \frac{e^x}{3}\stackrel{(x\geq 1/2\ln 3)}{\le} \sinh(x) \le \frac{e^x}{2}  .
\end{equation}
We first prove a technical lemma that gives almost tight bounds on $\theta_{r}(y)$.

\begin{lemma}
\label{lem:theta}
Let $0\le r \le R$ and $y\geq R-r$. Then
$$\theta_r(y)= 2e^{\frac{R-r-y}{2}}\left(1+\Theta\left(e^{R-r-y}\right) \right).$$ 
\end{lemma}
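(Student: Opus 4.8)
The plan is to work through $\cos\theta_r(y)$ and invert $\arccos$ near the value $1$. Set $w := R - r - y$. The hypothesis $y \ge R-r$ says exactly that $w \le 0$, hence $e^{w} \le 1$, and together with $0 \le r,y \le R$ it guarantees that the argument of the $\arccos$ in \eqref{eq:def_theta_y} lies in $[-1,1]$ (the bound $\ge -1$ is equivalent to $\cosh(r+y) \ge \cosh R$, i.e.\ to $w \le 0$; the bound $\le 1$ to $|r-y| \le R$), so $\theta_r(y)$ is well defined; we may assume $r,y>0$, since $r=0$ or $y=0$ forces $\{r,y\}=\{0,R\}$, $\theta_r(y)=\pi$, and the statement is then immediate. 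Using $\cosh r\cosh y-\sinh r\sinh y=\cosh(r-y)$ we reduce the problem to estimating
\begin{equation*}
z := 1-\cos\theta_r(y) = \frac{\cosh R-\cosh(y-r)}{\sinh r\,\sinh y} \in (0,2],
\end{equation*}
where $z=2$ exactly when $w=0$, and $\theta_r(y)=\arccos(1-z)$.

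The heart of the argument is the estimate $z = 2e^{w}\bigl(1+O(e^{w})\bigr)$. Writing $\cosh t=\tfrac12(e^{t}+e^{-t})$, $\sinh t=\tfrac12(e^{t}-e^{-t})$ and dividing numerator and denominator by $e^{r+y}$,
\begin{equation*}
z = \frac{2\bigl(e^{w}+e^{-R-r-y}-e^{-2r}-e^{-2y}\bigr)}{(1-e^{-2r})(1-e^{-2y})}.
\end{equation*}
Each correction term in the numerator has absolute value at most $e^{2w}$: the inequalities $e^{-R-r-y}\le e^{2w}$, $e^{-2r}\le e^{2w}$, $e^{-2y}\le e^{2w}$ are equivalent, respectively, to $r+y\le 3R$, $y\le R$, $r\le R$, all of which hold. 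Hence the numerator equals $2e^{w}\bigl(1+O(e^{w})\bigr)$ (using $e^{w}\le 1$). If $w\le -T$ for a suitably large absolute constant $T$, the constraints $r,y\le R$ force $r,y\ge T$, so the denominator is at least $(1-e^{-2T})^{2}$ and hence bounded away from $0$, which gives $z=2e^{w}\bigl(1+O(e^{w})\bigr)$; in the complementary range $-T<w\le 0$ a direct check from the exact formula (again using $R\le r+y$ and $r,y\le R$) shows $z=\Theta(1)$, matching $2e^{w}=\Theta(1)$.

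To conclude I would invert $\arccos$. When $w\le -T$ the quantity $z$ is small, so the standard expansion $\arccos(1-z)=\sqrt{2z}\,\bigl(1+O(z)\bigr)$ as $z\to0$ gives
\begin{equation*}
\theta_r(y) = \sqrt{4e^{w}\bigl(1+O(e^{w})\bigr)}\,\bigl(1+O(e^{w})\bigr) = 2e^{w/2}\bigl(1+O(e^{w})\bigr),
\end{equation*}
which is the claim. When $-T<w\le 0$ we have $z=\Theta(1)$, hence $\theta_r(y)=\arccos(1-z)=\Theta(1)$, and $2e^{w/2}=\Theta(1)$ as well, so $\theta_r(y)=2e^{w/2}\cdot\Theta(1)$, again the assertion (with an error term of constant order). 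Retaining the next term in $\arccos(1-z)=\sqrt{2z}\,(1+\tfrac{z}{12}+O(z^{2}))$ moreover shows the relative error is genuinely of order $e^{w}$, which justifies writing $\Theta(e^{w})$ as in the statement.

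The step I expect to be the main obstacle is the bookkeeping around the estimate for $z$: one must verify that every exponentially small correction is \emph{uniformly} dominated by $e^{w}$ over all admissible pairs $(r,y)$, and the genuinely delicate sub-case is when $r$ — or, by the symmetry of $\theta_r(y)$ in $r,y$, when $y$ — is close to $0$, so that the denominator $\sinh r\,\sinh y$ degenerates. Splitting off the main regime $w\le -T$ (which forces $r,y\ge T$) and treating bounded $w$ by the cruder $\Theta(1)$ estimate is precisely what renders this manageable; the remaining work (the $\arccos$ expansion, and the $\Theta(1)$ bounds for bounded $w$) is routine.
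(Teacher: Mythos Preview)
Your proof is correct and follows essentially the same route as the paper's: both reduce to showing $1-\cos\theta_r(y)=2e^{R-r-y}\bigl(1+O(e^{R-r-y})\bigr)$ via the same exponential expansion of the hyperbolic functions, and then invert the cosine (you via the Taylor expansion of $\arccos(1-z)$ after splitting off the bounded-$w$ regime, the paper via separate lower and upper Taylor bounds on $\cos\theta$). One small wording point: in the regime $w\le -T$ you actually need the denominator $(1-e^{-2r})(1-e^{-2y})=1+O(e^{2w})$ --- not merely ``bounded away from $0$'' --- to pin down the leading constant $2$ in $z$, but this follows immediately from the inequalities $e^{-2r},e^{-2y}\le e^{2w}$ you have already established.
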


\begin{proof}
 By using~\eqref{eq:def_theta_y} and the trigonometric identity
  \begin{equation}
    \label{eq:trig_id_cosh_add}
    \cosh(x\pm y)=\cosh(x)\cosh(y)\pm \sinh(x)\sinh(y) .
  \end{equation}
  we infer that
  \begin{equation*}
    \begin{split}
      \cos\bigl(\theta_r(y)\bigr) &= \frac{\sinh(r)\sinh(y) +\cosh(r-y)
        -\cosh(R)}{\sinh(r)\sinh(y)}\\
      &=1 + 2\frac{e^{r-y}+e^{-r+y}}{(e^{r}-e^{-r})(e^{y}-e^{-y})} -
      2\frac{e^{R}+e^{-R}}{(e^{r}-e^{-r})(e^{y}-e^{-y})}\\
      &=1 + 2\frac{e^{-2r}+e^{-2y}}{(1-e^{-2r})(1-e^{-2y})} -
      2\frac{e^{R-r-y}+e^{-R-r-y}}{(1-e^{-2r})(1-e^{-2y})}.
    \end{split}
  \end{equation*}
  Observe that both \(r\) and \(y\) are non-negative. By applying the identity \(1/(1-x) = 1 + \Theta(x)\), which is valid for all $0< x <1$, we obtain
  \begin{equation*}
    \begin{split}
      \cos\bigl(\theta_r(y)\bigr) &\stackrel{\phantom{y \ge R-r}}{=} 1 + 2 \left(e^{-2y} + e^{-2r} - e^{R-r-y} - e^{-R-r-y}\right)\left(1+\Theta(e^{-2r})\right)\left(1+\Theta(e^{-2y})\right)\\
      &\stackrel{y \ge R-r}{=} 1 - 2e^{R-r-y} + \Theta(e^{-2y} + e^{-2r}).
    \end{split}
  \end{equation*}
In our next estimates we will get rid of the cosine in the above expression.
By $\cos(\theta)\geq 1-\frac{\theta^2}{2}$ we derive
$$\theta_r(y)^2\geq 4 e^{R-r-y}  -\Theta\left(e^{-2r}+e^{-2y} \right).$$
Note that whenever $|x| \le 1$
\begin{equation}
\label{eq:sqrt_approx}
\sqrt{1+x}=1+\frac{x}{2}+\Theta(x^2).
\end{equation}
Applied to the previous equation, this gives a lower bound of
\begin{equation}
\label{eq:theta_lb}
\theta_r(y)\geq 2 e^{\frac{R-r-y}{2}}\left(1-\Theta\left(\frac{e^{-2r}+e^{-2y}}{e^{R-r-y}}\right)\right).
\end{equation}
Next we will derive an almost matching upper bound for $\theta_r(y)$. First we exploit that $\cos(\theta)\le 1-\frac{\theta^2}{2}+\frac{\theta^4}{4!}$ and thereby
$$ \frac{\theta_r^2(y)}{2}-\frac{\theta_r^4(y)}{4!} \le  2e^{R-r-y} - \Theta\left(e^{-2r}+e^{-2y}\right) .$$ 
This quadratic equation can be solved exactly by using basic tools. We omit the detailed calculations, and show just the final outcome. We obtain that
\begin{equation*}
\theta_r(y)^2\le 6-6\sqrt{1-\frac{4}{3} e^{R-r-y}\left(1-\Theta\left(\frac{e^{-2r}+e^{-2y}}{e^{R-r-y}}\right)\right)}.
\end{equation*}
Note that if $\frac{4}{3}e^{R-r-y} \left(1-\Theta\left(\frac{e^{-2r}+e^{-2y}}{e^{R-r-y}}\right)\right)<1$ then we can apply \eqref{eq:sqrt_approx}. If on the other hand we have $\frac{4}{3}e^{R-r-y} \left(1-\Theta\left(\frac{e^{-2r}+e^{-2y}}{e^{R-r-y}}\right)\right)\geq1$ then $\Theta\left(\left( e^{R-r-y}-(e^{-2r}+e^{-2y})\right)^2 \right)=\Theta(1)$. With this we obtain
\begin{align*}
  \theta_r(y)^2& \le 4 e^{R-r-y}\left(1-\Theta\left(\frac{e^{-2r}+e^{-2y}}{e^{R-r-y}}\right)\right)+\Theta\left(\left( e^{R-r-y}-(e^{-2r}+e^{-2y})\right)^2 \right)\\
&=\, 4  e^{R-r-y} -\Theta\left(e^{-2r} +e^{-2y} \right) +\Theta\left(e^{2(R-r-y)} \right) -\Theta\left(e^{R-r-y}(e^{-2r}+e^{-2y}) \right)+\Theta\left(e^{-4r} +e^{-4y}\right)\\
&=\,4e^{R-r-y} \left(1+\Theta\left( e^{R-r-y}\right) \right).
\end{align*}
Hence by applying again \eqref{eq:sqrt_approx} we get
$$\theta_r(y)\le 2e^{\frac{R-r-y}{2}}\left(1+\Theta\left(e^{R-r-y}\right) \right)$$
which together with \eqref{eq:theta_lb} concludes the proof.
\end{proof}

Our second lemma gives precise estimates for the measures of several useful combinations of balls. It is heavily used in all the later calculations, and is an important ingredient of our proofs. Note that the claimed formulas look a bit overloaded on the first sight; however, the derived bounds make it applicable for different purposes as we will see in later parts of this paper.
\begin{lemma}\label{lem:intersection_area} For any  \(0 \leq r \leq R\)
  and any \(0 \leq x \leq R\) we have
  \begin{align}
    \label{eq:ball-area}
    \mu\bigl(B_0(x)\bigr) &= e^{-\alpha(R-x)}(1+o(1))\\
    \label{eq:ball-area-capped}
    \mu\bigl(B_r(R)\cap B_0(R)\bigr)&= \frac{2\alpha
      e^{-r/2}}{\pi(\alpha-1/2)} \Bigl(
    1\pm O\bigl(e^{-(\alpha-1/2)r}
    +  e^{-r}\bigr)\Bigr).
  \end{align}
  Further, for \(x \leq R-r\)
  \begin{equation}
  \label{eq:massComp1}
    \mu\bigl((B_r(R) \cap B_0(R))\setminus B_{0}(x)\bigr) =
    \frac{2\alpha e^{-r/2}}{\pi(\alpha-1/2)} \Bigl(
    1\pm O\bigl(e^{-(\alpha-1/2)r}
   +e^{-r}\bigr)\Bigr),
  \end{equation}
  while for \(x \geq R-r\) it holds that
  \begin{multline}
  \label{eq:massComp}
    \mu\bigl((B_r(R) \cap B_0(R))\setminus B_{0}(x)\bigr) =\\
    = \frac{2\alpha
      e^{-r/2}}{\pi(\alpha-1/2)} \left(
    1- \left(1+\frac{\alpha-1/2}{\alpha+1/2}e^{-2\alpha x} \right)e^{-(\alpha-1/2)(R-x)}\right) \Bigl(1
    \pm O\bigl(e^{-r}+e^{-r-(R-x)(\alpha-3/2)}\bigr)
    \Bigr).
  \end{multline}
\end{lemma}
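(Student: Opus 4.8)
The plan is to derive all four estimates from the single integral of \eqref{eq:ball_integral}, namely $\mu(B_r(R)\cap B_0(R)) = 2\int_0^R \theta_r(y)\,f(y)\,dy$ with $f(y) = \frac{\alpha\sinh(\alpha y)}{2\pi(\cosh(\alpha R)-1)}$, together with its truncations. Two ingredients do the work. First, by \eqref{eq:trig_id_cosh_add} one has $d(r,y,\pi) = r+y$, so $\theta_r(y)=\pi$ for $y\le R-r$ (every angle admissible), while for $y>R-r$ Lemma~\ref{lem:theta} gives $\theta_r(y)=2e^{(R-r-y)/2}\bigl(1+\Theta(e^{R-r-y})\bigr)$. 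Second, \eqref{eq:approx_cosh} lets us replace hyperbolic functions by exponentials: $\sinh(\alpha y)=\tfrac12 e^{\alpha y}(1-e^{-2\alpha y})$ and $\cosh(\alpha R)-1=\tfrac12 e^{\alpha R}(1-o(1))$; the range $\alpha y=O(1)$ where \eqref{eq:approx_cosh} is unavailable is harmless, since $\{r'=O(1)\}$ has total measure $O(e^{-\alpha R})$, negligible against every main term below. Estimate \eqref{eq:ball-area} is then immediate: the hyperbolic distance from the origin to $(r',\theta')$ equals $r'$, so $B_0(x)=\{r'\le x\}$ and $\mu(B_0(x))=\frac{\cosh(\alpha x)-1}{\cosh(\alpha R)-1}=e^{-\alpha(R-x)}(1+o(1))$ by \eqref{eq:approx_cosh}.

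Since $B_0(x)=\{r'\le x\}$, forming $(B_r(R)\cap B_0(R))\setminus B_0(x)$ simply deletes the slab $y\in[0,x]$ from the integral. For \eqref{eq:ball-area-capped} (the case $x=0$) and \eqref{eq:massComp1} (the case $0\le x\le R-r$) this leaves $2\int_x^{R-r}\pi f(y)\,dy + 2\int_{R-r}^R \theta_r(y)f(y)\,dy$, whose first term equals $\frac{\cosh(\alpha(R-r))-\cosh(\alpha x)}{\cosh(\alpha R)-1}=O(e^{-\alpha r})$ — this is $O(e^{-(\alpha-1/2)r})$ relative to the target leading term $\frac{2\alpha e^{-r/2}}{\pi(\alpha-1/2)}$ and is absorbed (note that this also explains why \eqref{eq:massComp1} has the same value as \eqref{eq:ball-area-capped}, independently of $x$). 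For \eqref{eq:massComp} (the case $R-r\le x\le R$) the slab $[0,x]$ swallows the whole $\theta_r=\pi$ region, leaving only $2\int_x^R \theta_r(y)f(y)\,dy$. In all three cases, writing $\ell:=\max\{x,R-r\}$ for the lower limit of the remaining ``outer'' integral and inserting Lemma~\ref{lem:theta} together with the exponential form of $f$ reduces the main contribution of that integral to the elementary
\[
\frac{2\alpha e^{(R-r)/2}}{\pi e^{\alpha R}}\int_{\ell}^{R}\bigl(e^{(\alpha-1/2)y}-e^{-(\alpha+1/2)y}\bigr)\,dy .
\]

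The $e^{(\alpha-1/2)y}$ part integrates to $\frac{2\alpha e^{-r/2}}{\pi(\alpha-1/2)}\bigl(1-e^{-(\alpha-1/2)(R-\ell)}\bigr)$: with $\ell=R-r$ this is $\frac{2\alpha e^{-r/2}}{\pi(\alpha-1/2)}\bigl(1-e^{-(\alpha-1/2)r}\bigr)$, giving the leading term $\frac{2\alpha e^{-r/2}}{\pi(\alpha-1/2)}$ of \eqref{eq:ball-area-capped} and \eqref{eq:massComp1} up to an error absorbed into the $\pm O$; with $\ell=x$ it gives the factor $\bigl(1-e^{-(\alpha-1/2)(R-x)}\bigr)$ of \eqref{eq:massComp}. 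The $e^{-(\alpha+1/2)y}$ part, evaluated at $y=\ell=x$ and with its exponent rewritten via $R=(R-x)+x$, contributes exactly $-\frac{2\alpha e^{-r/2}}{\pi(\alpha-1/2)}\cdot\frac{\alpha-1/2}{\alpha+1/2}e^{-2\alpha x}e^{-(\alpha-1/2)(R-x)}$, which combines with the previous term to produce the bracket $1-\bigl(1+\frac{\alpha-1/2}{\alpha+1/2}e^{-2\alpha x}\bigr)e^{-(\alpha-1/2)(R-x)}$ of \eqref{eq:massComp}; the $e^{-(\alpha+1/2)R}$ endpoint of this integral and the $(1-o(1))$ from the normalization only contribute $O(e^{-r/2-2\alpha R})$, negligible against everything claimed.

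The one genuinely delicate point is controlling the error coming from the $\Theta(e^{R-r-y})$ factor in Lemma~\ref{lem:theta}. Propagated through the computation it contributes a quantity of order $\frac{e^{3(R-r)/2}}{e^{\alpha R}}\int_{\ell}^{R}e^{(\alpha-3/2)y}\,dy$, and the shape of this error depends on the sign of $\alpha-\tfrac32$: for $\alpha>\tfrac32$ the upper endpoint $y=R$ dominates, giving relative error $O(e^{-r})$, whereas for $\alpha<\tfrac32$ the lower endpoint $y=\ell$ dominates, giving relative error $O(e^{-(\alpha-1/2)r})$ when $\ell=R-r$ and, after a short exponent computation again using $R=(R-x)+x$, relative error $O(e^{-r-(R-x)(\alpha-3/2)})$ when $\ell=x$. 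Combining this with the $O(e^{-(\alpha-1/2)r})$ from the $\theta_r=\pi$ slab (present only when $\ell=R-r$) and dividing by $\frac{2\alpha e^{-r/2}}{\pi(\alpha-1/2)}$ yields the stated $\pm O(e^{-(\alpha-1/2)r}+e^{-r})$ in \eqref{eq:ball-area-capped} and \eqref{eq:massComp1}, and $\pm O(e^{-r}+e^{-r-(R-x)(\alpha-3/2)})$ in \eqref{eq:massComp}. The only remaining labor is to carry out this accounting uniformly over $0\le r\le R$ and over the relevant ranges of $x$; it is routine but somewhat lengthy error bookkeeping, and that bookkeeping — not any single estimate — is where the real effort lies.
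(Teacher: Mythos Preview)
Your proposal is correct and follows essentially the same route as the paper: split the radial integral at $y=R-r$ (where $\theta_r(y)$ switches from $\pi$ to the estimate of Lemma~\ref{lem:theta}), evaluate the resulting elementary integral, and then bound the contribution of the $\Theta(e^{R-r-y})$ correction by analyzing which endpoint dominates depending on the sign of $\alpha-\tfrac32$. The only cosmetic difference is that the paper first evaluates $\int e^{-y/2}\sinh(\alpha y)\,dy$ in closed form and then expands, whereas you expand $\sinh(\alpha y)=\tfrac12(e^{\alpha y}-e^{-\alpha y})$ before integrating; both lead to the same main term and error terms.
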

\begin{figure}
  \begin{center}
    \begin{tikzpicture}[dot/.style={draw,circle,inner sep=1pt,fill},scale=0.7]
      \def\rad{2cm}; \def\rpos{1cm};
      \filldraw[fill=gray!20] (0,0) circle (1.4);
      \node[dot] (o) at (0,0) {};
      \draw (0,0) circle (\rad);
      \draw[latex-latex] (o) -- node[pos=0.4,above] {\(R\)} (30:\rad);
      \draw[latex-latex] (o) -- node[below] {\(x\)} (1.4,0);
    \end{tikzpicture}
    \hspace{0.5cm}
    \begin{tikzpicture}[dot/.style={draw,circle,inner sep=1pt,fill},scale=0.7]
      \def\rad{2cm}; \def\rpos{1cm};
      \begin{scope}[even odd rule]
        \clip (0,0) circle (\rad);
        \fill[fill=gray!20] (\rpos,0) circle (\rad);
      \end{scope}
      \node[dot] (c) at (0,0) {};
      \node[dot] (r) at (\rpos,0) {};
      \draw[latex-latex] (c) -- node[pos=0.4,above] {\(R\)} (30:\rad);
      \draw[latex-latex] (c) -- node[below] {\(r\)} (r);
      \draw (c) circle (\rad) (r) circle (\rad);
    \end{tikzpicture}
    \hspace{0.5cm}
    \begin{tikzpicture}[dot/.style={draw,circle,inner sep=1pt,fill},scale=0.7]
      \def\rad{2cm}; \def\rpos{1cm}; \def\radin{0.8cm};
      \begin{scope}[even odd rule]
        \clip (0,0) circle (\rad) (0,0) circle (\radin);
        \fill[fill=gray!20] (\rpos,0) circle (\rad);
      \end{scope}
      \node[dot] (c) at (0,0) {};
      \node[dot] (r) at (\rpos,0) {};
      \draw (c) circle (\rad) (c) circle (\radin) (r) circle (\rad);
      \draw[latex-latex] (c) -- node[pos=0.4,above] {\(R\)} (30:\rad);
      \draw[latex-latex] (c) -- node[below] {\(r\)} (r);
      \draw[latex-latex] (c) -- node[pos=0.3,left=-1pt] {\(x\)} (-120:\radin);
    \end{tikzpicture}
    \hspace{0.5cm}
    \begin{tikzpicture}[dot/.style={draw,circle,inner sep=1pt,fill},scale=0.7]
      \def\rad{2cm}; \def\rpos{1cm}; \def\radin{1.5cm}
      \begin{scope}[even odd rule]
        \clip (0,0) circle (\rad) (0,0) circle (\radin);
        \fill[fill=gray!20] (\rpos,0) circle (\rad);
      \end{scope}
      \node[dot] (c) at (0,0) {};
      \node[dot] (r) at (\rpos,0) {};
      \draw (c) circle (\rad) (c) circle (\radin) (r) circle (\rad);
      \draw[latex-latex] (c) -- node[pos=0.4,above] {\(R\)} (30:\rad);
      \draw[latex-latex] (c) -- node[below] {\(r\)} (r);
      \draw[latex-latex] (c) -- node[pos=0.3,left=-1pt] {\(x\)} (-120:\radin);
    \end{tikzpicture}
  \end{center}
  \caption{\small The grey areas represent the the point sets considered in \eqref{eq:ball-area}, \eqref{eq:ball-area-capped}, \eqref{eq:massComp1} and \eqref{eq:massComp}. Note that the native representation of the hyperbolic space is used, implying that the mass at larger distances from the origin grows exponentially fast.\label{fig:point_set_intersections}}
\end{figure}
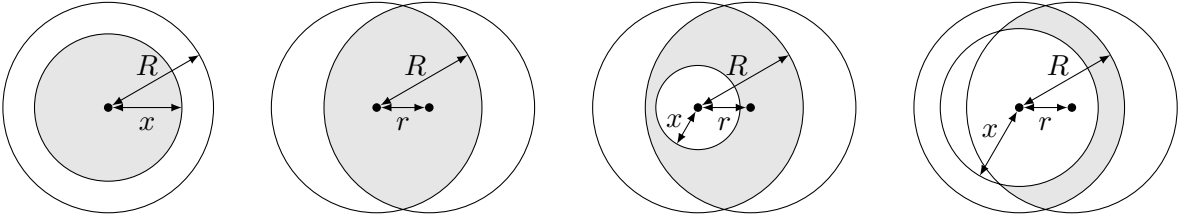
Before we continue with the proof of the lemma, let us give an intuitive description of the statement. Let us in particular consider~\eqref{eq:ball-area-capped}, as the subsequent equations are refinements of it. Equation~\eqref{eq:ball-area-capped} states that the mass of the intersection of $B_r(R)$ and $B_0(R)$ is, up to constants and error terms, equal to $e^{-r/2}$. Recall that in $G_{\alpha, C}(n)$ every point in $B_r(R)\cap B_0(R)$ is connected to the point $p$ with radial coordinate $r$ and $\theta = 0$. Thus, the degree of $p$ is a binomial distribution with parameters $n$ and $e^{-r/2}$. In particular, if $r$ is small, then the expected degree of $p$ is large, and on the other hand, if $r = 2\log n\approx R$, then the expected degree of $p$ is $O(1)$. In other words, the closer a vertex is located to the border of the disc, the smaller its degree will be, and~\eqref{eq:ball-area-capped} allows us to quantify precisely the dependence.
\begin{proof}[Proof of Lemma~\ref{lem:intersection_area}]
The definitions of $\mu$ and $B$, see \eqref{eq:pointset_mu} and \eqref{eq:ball}, imply that
  \begin{align*}
    \mu\bigl(B_0(x)\bigr) = 2\pi\int_0^x f(y) dy =
    \int_0^x\frac{\alpha\sinh(\alpha y)}{\cosh(\alpha R)-1} dy =
    \frac{\cosh(\alpha x) - 1}{\cosh(\alpha R) -1} = (1+o(1))e^{-\alpha(R-x)}.
  \end{align*}
This proves~\eqref{eq:ball-area}. We compute \(\mu\bigl(B_r(R)\cap B_0(R)\bigr)\) as discussed in Equation (\ref{eq:ball_integral}). It follows that
 $$ \mu \bigl( B_r(R) \cap B_0(R) \bigr) = 2\int_0^R\int_0^{\theta_r(y)}f(y)d\theta dy$$
    where $\theta_r(y)$ is as defined in \eqref{eq:def_theta_y}.
   Note that for \(y \leq R-r\) we have \(\theta_r(y) = \pi\) due to the triangle inequality.
   We can therefore split the integral into two parts, and we obtain
  \begin{equation}\label{eq:mu-br-b0-calculation}
    \mu\bigl(B_r(R)\cap B_0(R)\bigr) 
    =
    \mu\bigl(B_0(R-r)\bigr) + 2\int_{R-r}^R \theta_r(y)f(y)dy.
  \end{equation}
  The first part can be computed directly with~\eqref{eq:ball-area}.
  With similar arguments we establish that for
  \(0 \leq x \leq R-r\)
  \begin{equation}\label{eq:mu-br-b0-bx-small-calculation}
    \mu\bigl((B_r(R) \cap B_0(R))\setminus B_{0}(x)\bigr) =
    \mu\bigl(B_0(R-r)\bigr) - \mu\bigl(B_0(x)\bigr) + 2\int_{R-r}^R\theta_r(y)f(y)dy
  \end{equation}
  and for \(R-r \leq x \leq R\)
  \begin{equation}\label{eq:mu-br-b0-bx-large-calculation}
    \mu\bigl((B_r(R) \cap B_0(R))\setminus B_{0}(x)\bigr) = 2\int_{x}^R\theta_r(y)f(y)dy.
  \end{equation}
In the sequel we use Lemma~\ref{lem:theta} to compute the integrals in \eqref{eq:mu-br-b0-calculation},
  \eqref{eq:mu-br-b0-bx-small-calculation} and
  \eqref{eq:mu-br-b0-bx-large-calculation}. We begin with 
  \begin{equation}\label{eq:int-theta-f-dy}
      2\int_{x}^R \theta_r(y)f(y)dy = 2\int_{x}^R 2e^{\frac{R-r-y}{2}}\left(1 \pm
      O(e^{R-r-y})\right)\frac{\alpha \sinh(\alpha y)}{2\pi(\cosh(\alpha R)-1)}dy
  \end{equation}
  where we assume that the lower
  bound \(x\) is such that \(x \geq R-r\).
  We first solve the integral for the leading term without the error term. A simple calculation shows that
  \[
  	\int e^{-y/2} \sinh(\alpha y) dy = \frac{2}{4\alpha^2-1}e^{-y/2}(2 \alpha \cosh(\alpha y) + \sinh(\alpha y)).
  \]
Thus,
  \begin{multline}\label{eq:int-theta-f-dy-solved}
    2\int_{x}^R 2e^{\frac{R-r-y}{2}}\frac{\alpha \sinh(\alpha
      y)}{2\pi(\cosh(\alpha R)-1)}dy =\\= \frac{4 \alpha e^{-r/2} \bigl(2 \alpha \cosh(\alpha R)+\sinh(\alpha R)-e^{\frac{R-x}{2}} (2 \alpha \cosh(\alpha x)+\sinh(\alpha x)\bigr)}{\pi(4 \alpha^2-1)(\cosh(\alpha R)-1)}.
  \end{multline}
  Expanding all trigonometric terms to their definition as sums of exponential
  functions and the fact that \(1/(\cosh(\alpha R) -1) = 2\exp(-\alpha
  R)(1+\Theta(e^{-\alpha R}))\) we obtain
	\begin{equation}
	\label{eq:int-theta-f-dy-solved2}
  \begin{aligned}
    \eqref{eq:int-theta-f-dy-solved} &= \frac{8\alpha
      e^{-r/2}}{\pi(4\alpha^2-1)}\Bigl[\left(\alpha +
    \tfrac{1}{2}\right)\left(1-e^{-(\alpha-1/2)(R-x)}\right) \\ &\hspace{25mm}+ (\alpha -
    \tfrac{1}{2})\left(e^{-2\alpha
      R}-e^{(R-x)/2-\alpha(R+x)}\right)\Bigr](1+\Theta(e^{-\alpha R})) \\&=
    \frac{2\alpha e^{-r/2}}{\pi(\alpha-1/2)}\left(1- \left(1+\frac{\alpha-1/2}{\alpha+1/2}e^{-2\alpha x} \right)
    e^{-(\alpha-1/2)(R-x)}\right)\left(1+\Theta\left(e^{-\alpha R}\right)\right) .
  \end{aligned}
  \end{equation}
  The integral over the error term in \eqref{eq:int-theta-f-dy} is at most
  \begin{equation*}
    \label{eq:eq-int-theta-f-dy-error-solved}
    \int_{x}^R O\left(e^{\frac{3(R-r-y)}{2}}\right)\frac{\sinh(\alpha y)}{\cosh(\alpha R)}dy
    = \int_{x}^R O\left(e^{\frac{3(R-r-y)}{2} + \alpha(y - R)}\right)dy
    =O\Bigl(e^{-3r/2} + e^{-3r/2-(R-x)(\alpha-3/2)}\Bigr).
  \end{equation*}
  Combining the above with Equation~\eqref{eq:int-theta-f-dy-solved2} we
  finally have
  \begin{multline}
    \label{eq:int-theta-f-dy-solved-with-error}
    2\int_{x}^R \theta_r(y)f(y)dy = \\
    =\frac{2\alpha
      e^{-r/2}}{\pi(\alpha-1/2)} \left(
    1- \left(1+\frac{\alpha-1/2}{\alpha+1/2}e^{-2\alpha x} \right)e^{-(\alpha-1/2)(R-x)}\right)
    \Bigl(1 \pm O\bigl(e^{-r}+e^{-r-(R-x)(\alpha-3/2)}\bigr)
    \Bigr).
  \end{multline}
  For \(x = R-r\) we obtain
  \begin{equation*}
    2\int_{R-r}^R \theta_r(y)f(y)dy = \frac{2\alpha
      e^{-r/2}}{\pi(\alpha-1/2)} \Bigl(
    1\pm O\bigl(e^{-(\alpha-1/2)r}+e^{-r}\bigr)
    \Bigr)
  \end{equation*}
which results in 
  \begin{equation*}
    \mu\bigl(B_r(R)\cap B_0(R)\bigr) = e^{-\alpha r}(1+o(1)) + \frac{2\alpha
      e^{-r/2}}{\pi(\alpha-1/2)} \Bigl(
    1\pm O\bigl(e^{-(\alpha-1/2)r}+e^{-r}\bigr)
    \Bigr).
  \end{equation*}
  Note that the term \(e^{-\alpha r}\) can be written as \(e^{-r/2 -
    (\alpha-1/2)r}\) and can be incorporated into the first error term,
  proving our claim.

  For \(\mu\bigl((B_r(R) \cap B_0(R))\setminus B_{0}(x)\bigr)\) and \(0 \leq x
  \leq R-r\) we obtain the same bound as in this regime
  \(\mu\bigl(B_0(R-r)\bigr)\) asymptotically dominates
  \(\mu\bigl(B_0(x)\bigr)\). The solution for \(R-r \leq x \leq R\) is given
  by Equation~\eqref{eq:int-theta-f-dy-solved-with-error}.

\end{proof}

The next lemma confirms the somewhat intuitive fact that a ball around a point has a higher measure the closer the point is located to the center of the disk.
\begin{lemma}
\label{lem:monotone}
For all $0\le r_0\le R$, all $r_0\le r \le R$ and all $0\le x \le R$
$$\mu(B_0(R)\cap B_{r_0}(R))\geq \mu(B_0(R)\cap B_r(R))$$
and 
$$\mu(B_0(R)\cap B_{r_0}(R)\setminus B_0(x))\geq \mu(B_0(R)\cap B_r(R)\setminus B_0(x)).$$
\end{lemma}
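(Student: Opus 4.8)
The plan is to reduce both inequalities to a single statement — that the angular halfwidth $\theta_r(y)$ is, for each fixed radial coordinate $y$, a non-increasing function of $r$ — and then conclude by integrating a pointwise inequality against the nonnegative density $f$.

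First I would record a uniform integral representation. Extending the convention $\theta_r(y):=\pi$ for $y\le R-r$ (which the triangle inequality forces, as already used in the excerpt), I claim that for every $0\le x\le R$,
\[
\mu\bigl((B_r(R)\cap B_0(R))\setminus B_0(x)\bigr)=2\int_x^R \theta_r(y)\,f(y)\,dy .
\]
For $x\ge R-r$ this is exactly \eqref{eq:mu-br-b0-bx-large-calculation}; for $x\le R-r$ one splits $\int_x^R=\int_x^{R-r}+\int_{R-r}^R$ and uses $2\pi\int_x^{R-r}f(y)\,dy=\mu(B_0(R-r))-\mu(B_0(x))$ (from the definition of $\mu$, cf.\ \eqref{eq:ball-area}) to recover \eqref{eq:mu-br-b0-bx-small-calculation}. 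The case $x=0$ is \eqref{eq:ball_integral}, and since $\mu(B_0(0))=0$, the first displayed inequality of the lemma is the $x=0$ instance of the second; hence it suffices to treat the second.

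The heart of the argument is the claim that for every fixed $y\in(0,R]$ the map $r\mapsto\theta_r(y)$ is non-increasing on $(0,R]$. On the plateau $y\le R-r$ it equals $\pi$, its maximal value. On the range $y>R-r$ I would differentiate $g(r):=\cos\theta_r(y)=\dfrac{\cosh r\cosh y-\cosh R}{\sinh r\sinh y}$; a short computation gives $g'(r)=\dfrac{\cosh R\,\cosh r-\cosh y}{\sinh^2 r\,\sinh y}\ge 0$, the sign being forced by $y\le R$ (so $\cosh y\le\cosh R\le\cosh R\,\cosh r$). Since $\arccos$ is decreasing, $\theta_r(y)=\arccos g(r)$ is non-increasing on that range, and at the junction $y=R-r$ one checks $g(r)=-1$, so the two pieces fit together continuously. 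Altogether $\theta_r(y)$ is non-increasing in $r$.

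Finally: fix $0\le r_0\le r\le R$. If $r_0=0$ then $B_0(R)\cap B_{r_0}(R)=B_0(R)\supseteq B_0(R)\cap B_r(R)$, and both inequalities are immediate at the level of sets. If $r_0>0$, the claim gives $\theta_{r_0}(y)\ge\theta_r(y)$ for all $y\in(0,R]$ (the only band needing a separate look is $R-r<y\le R-r_0$, where $\theta_{r_0}(y)=\pi\ge\theta_r(y)$ trivially); multiplying by $f(y)\ge 0$ and integrating over $[x,R]$ yields the second inequality, and the first is its $x=0$ case. The only real care needed anywhere is the bookkeeping around the $\theta_r(y)=\pi$ plateau and the fact that the nominal shape of the formula for $\mu((B_r(R)\cap B_0(R))\setminus B_0(x))$ switches at $x=R-r$; the uniform representation above dissolves this, after which the derivative-sign computation for $g$ — short, and exactly where the hypothesis $0\le r,y\le R$ is used — is all that remains.
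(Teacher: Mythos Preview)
Your proof is correct and follows essentially the same approach as the paper: both reduce the inequalities via the integral representations \eqref{eq:mu-br-b0-calculation}--\eqref{eq:mu-br-b0-bx-large-calculation} to the monotonicity of $\theta_r(y)$ in $r$, and both obtain this from the fact that the argument of $\arccos$ is increasing in $r$. You are in fact more careful than the paper, which simply asserts that $\frac{\cosh(r)\cosh(y)-\cosh(R)}{\sinh(r)\sinh(y)}$ is increasing in $r$; you actually compute $g'(r)=\frac{\cosh R\cosh r-\cosh y}{\sinh^2 r\,\sinh y}$ and verify the sign, and you also handle the $\theta_r(y)=\pi$ plateau and the degenerate case $r_0=0$ explicitly.
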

\begin{proof}
Because of Equation (\ref{eq:mu-br-b0-calculation}), (\ref{eq:mu-br-b0-bx-small-calculation}) and (\ref{eq:mu-br-b0-bx-large-calculation}) it suffices to show that $\theta_{r_0}(y)\geq \theta_{r}(y)$ for $0\le y \le R$. 
To see this, recall first that $\theta_r(y)$ is given by the solution of 
$$\theta_r(y)=\arccos\left(\frac{\cosh(r)\cosh(y)-\cosh(R)}{\sinh(r)\sinh(y)} \right).$$
The claim follows, since $\frac{\cosh(r)\cosh(y)-\cosh(R)}{\sinh(r)\sinh(y)}$ is increasing in $r$ and $\arccos$ is decreasing.
\end{proof}

\section{Proofs of the Main Results}

Before we give the proofs for our theorems, let us briefly describe a technique which we use to show concentration for the clustering coefficient and the degree sequence. We wish to apply an Azuma-Hoeffding-type large deviation inequality (see Lemma~\ref{lem:concentration} below) to show that the sum of the local clustering coefficients $X:=\sum_{v\in V}c_v$ and the number of vertices of degree $k$ $D_k$ are concentrated around its expectation.

In a typical setting, such concentration inequalities require some kind of \emph{Lipschitz condition} that is satisfied by the function under consideration. In our specific setting, the functions are $X$ and $D_k$, and it is required to provide a bound for the maximum effect that any vertex has. However, the only a priori bound that can be guaranteed is that for example the number of vertices of degree $k$ can change by at most $n$, as a vertex may connect or not connect to any other vertex. To make the situation worse, this bound is even tight, since a vertex can be placed at the center of disc, i.e., if it has radial coordinate equal to 0.

We will overcome this obstacle as follows. Instead of counting the total number of vertices of degree $k$ and the sum of all the local clustering coefficients, we will consider only vertices that lie far away from the center of the disc, i.e., which have radial coordinate larger than $\beta R$, for some appropriate $\beta >0$. Moreover, we will consider only vertices such that all their neighbors have a large radial coordinate as well. This restriction will allow us to bound the maximum effect on the target function, as with high probability all these vertices do not have too large degree.

More formally, we proceed as follows. We partition the vertex set of $G_{\alpha, C}(n)$ into two sets. The \emph{inner set} $I = I(\beta)$ contains all vertices of radius at most $\beta R$ while the \emph{outer set} $O = O(\beta)$ contains all vertices of radius larger than $\beta R$. 

We will use the following large deviation inequality. Let $f$ be a function on the random variables $X_1, \dots, X_n$ that take values in some set $A_i$. We say that $f$ is Lipschitz with coefficients $c_1\dots c_n$ and bad event $\mathcal{B}$ if for all $x,y\in A$ 
$$\left|\mathbb{E}[f|X_{1},\dots X_{i-1}, X_i=x, \overline{\mathcal{B}}]-\mathbb{E}[f|X_{1},\dots X_{i-1}, X_i=y, \overline{\mathcal{B}}]\right|\le c_i .$$
(We denote by $\overline{\mathcal{B}}$ the complement of $\mathcal{B}$.)
Then the following estimates are true.
\begin{theorem}[Theorem~7.1 in \cite{dubhashi2009concentration}]
\label{lem:concentration}
Let $f$ be a function of $n$ independent random variables $X_1,\dots, X_n$, each $X_i$ taking values in a set $A_i$, such that $\mathbb{E}[f]$ is bounded. Assume that 
$$m\le f(X_1, \dots, X_n)\le M.$$
Let $\mathcal{B}$ any event, and let $c_i$ be the maximum effect of $f$ assuming the complement $\overline{\mathcal{B}}$ of $\mathcal{B}$:
$$\max_{x,y}|\mathbb{E}[f|X_1,\dots, X_{i-1}, X_i=x, \overline{\mathcal{B}}]-\mathbb{E}[f|X_1,\dots, X_{i-1}, X_i=y, \overline{\mathcal{B}}] | \le c_i.$$
 Then
$$\Pr[f> \mathbb{E}[f]+t+(M-m)\Pr[\mathcal{B}]] \le e^{-{2t^2}/{\sum_i c_i^2}} +\Pr[\mathcal{B}]$$
and
$$\Pr[f< \mathbb{E}[f]-t-(M-m)\Pr[\mathcal{B}]] \le e^{-{t^2}/{\sum_i c_i^2}} +\Pr[\mathcal{B}].$$
\end{theorem}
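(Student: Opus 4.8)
The plan is to prove this by the method of bounded differences (an Azuma--Hoeffding estimate for a Doob martingale), but carried out under the conditional law $\Pr[\cdot\mid\overline{\mathcal B}]$ rather than under $\Pr$. Assume, as in all the intended applications, that $\mathcal B$ is measurable with respect to $X_1,\dots,X_n$. Write $\Pr'[\cdot]=\Pr[\cdot\mid\overline{\mathcal B}]$, $\mathbb{E}'[\cdot]=\mathbb{E}[\cdot\mid\overline{\mathcal B}]$, and define the Doob martingale
\[
  Z_i=\mathbb{E}\bigl[f\mid X_1,\dots,X_i,\ \overline{\mathcal B}\bigr],\qquad i=0,\dots,n,
\]
with respect to the filtration $\mathcal F_i=\sigma(X_1,\dots,X_i)$. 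Note that $(Z_i)$ is automatically an $(\mathcal F_i,\Pr')$-martingale, that $Z_0=\mathbb{E}'[f]$ is finite since $m\le f\le M$, and that $Z_n=f$ on $\overline{\mathcal B}$ because $\mathcal B\in\sigma(X_1,\dots,X_n)$.

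The key steps are: \emph{(i)} bound the increments. Since $Z_{i-1}=\mathbb{E}'[Z_i\mid\mathcal F_{i-1}]$ is a weighted average over the possible values $x$ of $X_i$ of the quantities $\mathbb{E}[f\mid X_1,\dots,X_{i-1},X_i=x,\overline{\mathcal B}]$, and the hypothesis on $c_i$ says all of these (hence also $Z_i$) lie in a single interval of length at most $c_i$, the difference $Z_i-Z_{i-1}$ lies in an $\mathcal F_{i-1}$-measurable interval of length $\le c_i$. Plugging this into Hoeffding's lemma and the standard exponential-moment telescoping gives $\Pr'[Z_n-Z_0\ge t]\le\exp(-2t^2/\sum_i c_i^2)$, and symmetrically for $Z_0-Z_n$. \emph{(ii)} Relate $Z_0$ to $\mathbb{E}[f]$: from $\mathbb{E}[f]=\mathbb{E}'[f]\Pr[\overline{\mathcal B}]+\mathbb{E}[f\mid\mathcal B]\Pr[\mathcal B]$ one gets $Z_0-\mathbb{E}[f]=(\mathbb{E}'[f]-\mathbb{E}[f\mid\mathcal B])\Pr[\mathcal B]$, so $|Z_0-\mathbb{E}[f]|\le(M-m)\Pr[\mathcal B]$ by $m\le f\le M$.

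Then transfer back to $\Pr$ and absorb the corrections. For the upper tail, split on $\mathcal B$:
\[
  \Pr\bigl[f>\mathbb{E}[f]+t+(M-m)\Pr[\mathcal B]\bigr]\le\Pr[\mathcal B]+\Pr\bigl[f>\mathbb{E}[f]+t+(M-m)\Pr[\mathcal B],\ \overline{\mathcal B}\bigr].
\]
On $\overline{\mathcal B}$ we have $f=Z_n$ and $\mathbb{E}[f]+(M-m)\Pr[\mathcal B]\ge Z_0$, so the last event sits inside $\{Z_n-Z_0>t\}\cap\overline{\mathcal B}$, of probability at most $\Pr'[Z_n-Z_0>t]\cdot\Pr[\overline{\mathcal B}]\le\exp(-2t^2/\sum_i c_i^2)$; a union bound closes the upper tail. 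The lower tail is symmetric; it in fact also yields the constant $2t^2$, so the displayed $e^{-t^2/\sum_i c_i^2}$ is just a safe weakening (equivalently, it drops out of the cruder one-sided Azuma bound that only uses $|Z_i-Z_{i-1}|\le c_i$).

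The only genuinely non-routine point, and therefore the main obstacle, is step \emph{(i)}: conditioning on $\overline{\mathcal B}$ can correlate the $X_i$, so one cannot invoke an off-the-shelf bounded-differences inequality for independent variables and must instead verify by hand that the Doob martingale built under $\Pr'$ still has increments confined to intervals of length $c_i$. Once that is established, the remainder is bookkeeping: the exponential-moment computation is the textbook Azuma--Hoeffding argument, and the $(M-m)\Pr[\mathcal B]$ shift together with the additive $\Pr[\mathcal B]$ are exactly what step \emph{(ii)} and the set inclusions above produce.
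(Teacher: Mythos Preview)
The paper does not prove this statement at all: it is quoted verbatim as Theorem~7.1 from \cite{dubhashi2009concentration} and used as a black box. Your sketch is correct and is exactly the standard argument one finds in that reference---build the Doob martingale under the conditional measure $\Pr[\,\cdot\mid\overline{\mathcal B}\,]$, observe that the hypothesis on the $c_i$ pins each increment into an $\mathcal F_{i-1}$-measurable interval of length $c_i$, apply Azuma--Hoeffding, and then undo the conditioning at the cost of the additive $\Pr[\mathcal B]$ and the shift $(M-m)\Pr[\mathcal B]$. Two minor remarks: your measurability caveat on $\mathcal B$ is not actually needed for $Z_n=f$, since $f$ is already $\sigma(X_1,\dots,X_n)$-measurable; and you are right that the lower tail also carries the constant $2$, so the displayed $e^{-t^2/\sum_i c_i^2}$ is simply a looser statement than what the argument delivers.
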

In our setting, the random variables of interest are usually functions of $X_1, \dots , X_n$, where $X_i$ denotes the coordinates of the $i$th vertex. For the clustering coefficient and the degree sequence a coordinate change can not have a large effect on the random variable as long as the degree of the corresponding vertex is small. The following lemma says that in $O(\beta)$ the degrees of the vertices is bounded with high probability. 
\begin{lemma}
\label{lem:lipschitz}
Let $\alpha > 1/2$ and $0<\beta<1$. There is a constant~$c>0$ such that the probability for the bad event
$$\mathcal{B}:=\left\{\text{there is a vertex in~$O(\beta) = B_0(R)\setminus B_0(\beta R)$ with degree at least~$cn^{1-\beta}$}\right\}$$
is at most $\Pr[\mathcal{B}]=e^{-\Omega\left( n^{1-\beta}\right)}.$
\end{lemma}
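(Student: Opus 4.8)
The plan is to condition on the position of a fixed vertex, observe that its degree is then a binomial random variable whose mean is $O(n^{1-\beta})$ \emph{uniformly} over all positions in $O(\beta)$, and then finish with a Chernoff bound followed by a union bound over the $n$ vertices.

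Concretely, fix a vertex $v$ and condition on its radial coordinate $r_v$. Since every other vertex $u$ has its coordinates drawn independently and always satisfies $r_u\le R$, i.e. $u\in B_0(R)$, the vertex $u$ is a neighbour of $v$ precisely when $u\in B_{r_v,\theta_v}(R)\cap B_0(R)$, an event of probability $\mu\bigl(B_{r_v}(R)\cap B_0(R)\bigr)$ which, by rotational symmetry, does not depend on $\theta_v$. Hence, conditioned on $r_v$, the degree of $v$ is distributed as $\Bin\bigl(n-1,\ \mu(B_{r_v}(R)\cap B_0(R))\bigr)$. Now suppose $v\in O(\beta)$, that is, $r_v>\beta R$. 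By the monotonicity established in Lemma~\ref{lem:monotone} we have $\mu(B_{r_v}(R)\cap B_0(R))\le \mu(B_{\beta R}(R)\cap B_0(R))$, and the estimate~\eqref{eq:ball-area-capped} of Lemma~\ref{lem:intersection_area} bounds the latter by $\frac{2\alpha e^{-\beta R/2}}{\pi(\alpha-1/2)}(1+o(1))$, the error term being $o(1)$ because $\beta R\to\infty$. Recalling $R=2\log n+C$, this equals $\frac{2\alpha e^{-\beta C/2}}{\pi(\alpha-1/2)}\,n^{-\beta}(1+o(1))$, so the conditional expected degree of $v$ is at most $\lambda:=\frac{2\alpha e^{-\beta C/2}}{\pi(\alpha-1/2)}\,n^{1-\beta}(1+o(1))=\Theta(n^{1-\beta})$.

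It then remains to control the upper tail of a binomial random variable with mean at most $\lambda=\Theta(n^{1-\beta})$. Using the Chernoff bound $\Pr[\Bin(m,p)\ge t]\le (e\,mp/t)^{t}$ and choosing the constant $c$ in the statement large enough that $c\,n^{1-\beta}\ge e^{2}\lambda$ for all sufficiently large $n$, we obtain $\Pr[\deg(v)\ge c\,n^{1-\beta}\mid r_v>\beta R]\le e^{-c\,n^{1-\beta}}$. Consequently $\Pr[\,v\in O(\beta)\ \text{and}\ \deg(v)\ge c\,n^{1-\beta}\,]\le e^{-c\,n^{1-\beta}}$ for each of the $n$ vertices, and a union bound yields $\Pr[\mathcal{B}]\le n\,e^{-c\,n^{1-\beta}}=e^{-\Omega(n^{1-\beta})}$, where the last step uses that $\log n=o(n^{1-\beta})$ since $\beta<1$ is a constant.

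The argument is largely routine; the two points requiring care are (i) invoking Lemma~\ref{lem:monotone} so as to obtain a bound on the connection probability that is uniform over all radii $r_v>\beta R$, rather than merely a bound in expectation, which makes the binomial domination clean, and (ii) applying the Chernoff bound in the genuine large-deviation regime, where the deviation is a constant factor above the mean, which is exactly what produces a tail of order $e^{-\Omega(n^{1-\beta})}$ strong enough to survive the union bound over all $n$ vertices.
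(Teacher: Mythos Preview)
Your proof is correct and follows essentially the same route as the paper: bound the connection probability for a vertex in $O(\beta)$ uniformly via Lemma~\ref{lem:monotone} and Lemma~\ref{lem:intersection_area}, obtain a binomial degree with mean $O(n^{1-\beta})$, apply a Chernoff bound in the large-deviation regime, and finish with a union bound. The only cosmetic difference is that the paper writes the measure as $\mu(B_0(R)\cap B_{\beta R}(R)\setminus B_0(\beta R))$ whereas you use $\mu(B_{\beta R}(R)\cap B_0(R))$; both are $O(e^{-\beta R/2})$, so this is immaterial.
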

\begin{proof}
Note that unless~$\mathcal{B}$ holds every vertex in~$O:=O(\beta)$ is connected to at most~$cn^{1-\beta}$ other vertices in~$O$. Therefore, any change in~$X_i$ can increase or decrease the number of vertices in~$O$ of degree~$k$ by at most~$cn^{1-\beta}+1$ (the additional ``+1'' is due to the fact that vertex $i$ could change its degree as well). It remains to bound the probability of~$\mathcal{B}$.
By applying Lemma~\ref{lem:monotone} we see that the expected degree in $O$ of a vertex with radius $r\geq \beta R$ is at most
$$n\cdot \mu(B_0(R)\cap B_{\beta R}(R)\setminus B_0(\beta R))\stackrel{\textrm{(Lem.\ \ref{lem:intersection_area})}}{=}O\left(n\cdot e^{-\frac{\beta R}{2}} \right) =O(n^{1-\beta}) .$$
Hence the expected degree of a vertex of radius at least $\beta R$ is at most $c'n^{1-\beta}$ for some constant $c'$. For $c:=2ec'$ it suffices to apply a Chernoff bound to show that for a vertex $v$ of radius at least $\beta R$
$$\Pr[d_O(v)>cn^{1-\beta}] \le 2^{-cn^{1-\beta}} .$$
The statement of the lemma follows by union bound over all vertices. 
\end{proof}

\subsection{The Clustering Coefficient}
Recall the definition of the local and global clustering coefficient in \eqref{eq:def_local_clustering} and \eqref{eq:def_global_clustering}. We will need the following technical statement, which gives an estimate for the measure of the intersection of the balls around two coordinates $(r_1, \theta_1)$ and $(r_2, \theta_2)$ if their angle difference $\theta:=|\theta_1-\theta_2|$ is very small. 

\begin{lemma}
\label{lem:double_intersection}
Let  $\beta>1/2$, $\beta R \le x\le R$, $r_1\geq r_2\geq x$ and $0\le \theta \le e^{-r_2/2}-e^{-r_1/2}$. Then
\begin{equation}
\mu\left(  B_0(R)\cap B_{r_1, 0}(R)\cap B_{r_2, \theta}(R)\setminus B_0(x)   \right)  =\mu\left(B_0(R)\cap B_{r_1}(R)\setminus B_0(x)\right).
\end{equation}
\end{lemma}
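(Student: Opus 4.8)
The plan is to show that under the stated hypotheses the ball $B_{r_2,\theta}(R)$ entirely contains the relevant portion of $B_{r_1,0}(R)$, so that intersecting with it changes nothing. Concretely, I want to argue that
$$
\bigl(B_0(R)\cap B_{r_1,0}(R)\bigr)\setminus B_0(x)\ \subseteq\ B_{r_2,\theta}(R),
$$
which immediately gives the claimed equality of measures. Since every point we care about has radial coordinate in $[x,R]$ with $x\ge \beta R$, and since $r_1\ge r_2\ge x$, the key geometric fact to exploit is that the ``angular half-width'' function $\theta_{r}(y)$ from \eqref{eq:def_theta_y} is, by Lemma~\ref{lem:theta}, essentially $2e^{(R-r-y)/2}$ in exactly this regime (note $y\ge x\ge R-r_1\ge R-r$ for the points in question, so the lemma applies).

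The main step is the following containment check. Take a point $p=(y,\phi)$ in $\bigl(B_0(R)\cap B_{r_1,0}(R)\bigr)\setminus B_0(x)$; then $x\le y\le R$ and $|\phi|\le\theta_{r_1}(y)$. I must verify $d(r_2,y,\phi-\theta)\le R$, i.e. that $p$ lies within angular distance $\theta_{r_2}(y)$ of the point $(r_2,\theta)$, i.e.
$$
|\phi-\theta|\ \le\ \theta_{r_2}(y).
$$
By the triangle inequality for the angle it suffices that $|\phi|+\theta\le\theta_{r_2}(y)$, and since $|\phi|\le\theta_{r_1}(y)$ it is enough to show
$$
\theta_{r_1}(y)+\theta\ \le\ \theta_{r_2}(y).
$$
Using Lemma~\ref{lem:theta}, $\theta_{r_i}(y)=2e^{(R-r_i-y)/2}(1+\Theta(e^{R-r_i-y}))$; the leading-order gap is
$$
\theta_{r_2}(y)-\theta_{r_1}(y)\ \approx\ 2e^{(R-y)/2}\bigl(e^{-r_2/2}-e^{-r_1/2}\bigr)\ \ge\ 2\bigl(e^{-r_2/2}-e^{-r_1/2}\bigr),
$$
where in the last inequality I used $y\le R$ so $e^{(R-y)/2}\ge 1$; this is at least $2\theta$ by the hypothesis $\theta\le e^{-r_2/2}-e^{-r_1/2}$, giving the required slack even after absorbing the $\Theta(\cdot)$ error terms (which are exponentially small since $r_1,r_2\ge x\ge\beta R$ with $\beta>1/2$). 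The reverse inclusion is trivial because intersecting with any set can only shrink it, and $B_0(R)\cap B_{r_1,0}(R)\setminus B_0(x)$ is one of the three sets being intersected on the left, so equality of the point sets — hence of their $\mu$-measures via \eqref{eq:pointset_mu} — follows.

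The main obstacle I anticipate is bookkeeping of the error terms and confirming that the $\Theta(e^{R-r-y})$ corrections from Lemma~\ref{lem:theta} (and the monotonicity one wants from Lemma~\ref{lem:monotone}) do not eat up the gap $2(e^{-r_2/2}-e^{-r_1/2})$; one has to check that $e^{R-r_2-y}$ is genuinely negligible compared to $e^{-r_2/2}-e^{-r_1/2}$, which could in principle fail if $r_1$ and $r_2$ are extremely close. The clean way around this is to note that $\theta_{r_2}(y)-\theta_{r_1}(y)$ can be estimated directly from the monotonicity and convexity of $\theta_r(y)$ in $r$ (as in the proof of Lemma~\ref{lem:monotone}, where $\tfrac{\cosh r\cosh y-\cosh R}{\sinh r\sinh y}$ is monotone in $r$), obtaining the bound $\theta_{r_2}(y)-\theta_{r_1}(y)\ge 2e^{(R-y)/2}(e^{-r_2/2}-e^{-r_1/2})$ without any error term by a mean-value / exact-derivative argument rather than through the asymptotic expansion; that sidesteps the error-term comparison entirely. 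The rest is routine.
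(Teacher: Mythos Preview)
Your approach is essentially the same as the paper's: the paper expresses the triple intersection as the integral
\[
\int_x^{R}\int_{\max\{-\theta_{r_1}(y),\ \theta-\theta_{r_2}(y)\}}^{\min\{\theta_{r_1}(y),\ \theta+\theta_{r_2}(y)\}}f(y)\,d\phi\,dy
\]
and then asserts, citing Lemma~\ref{lem:theta}, that the angular limits collapse to $[-\theta_{r_1}(y),\theta_{r_1}(y)]$; this is precisely your set containment $\bigl(B_0(R)\cap B_{r_1,0}(R)\bigr)\setminus B_0(x)\subseteq B_{r_2,\theta}(R)$ rephrased, and both reduce to the single inequality $\theta_{r_1}(y)+\theta\le\theta_{r_2}(y)$. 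If anything you are more careful than the paper, which dispatches the error-term issue you flag with ``it can be verified''.
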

\begin{proof}
It follows from similar observations like the ones that lead to \eqref{eq:ball_integral} that 
$$\mu\left(B_0(R)\cap B_{r_1, 0}(R)\cap B_{r_2, \theta}(R)\setminus B_0(x) \right)=\int_x^{R}\int_{\max\{-\theta_{r_1}(y), \theta-\theta_{r_2}(y)\}}^{\min\{  \theta_{r_1}(y), \theta+\theta_{r_2}(y)\}}f(y)d\phi dy.$$
For $ \theta \le e^{-r_2/2}-e^{-r_1/2}$ and $r_1\geq r_2$, using Lemma~\ref{lem:theta}, it can be verified that
$$\max\{-\theta_{r_1}(y), \theta-\theta_{r_2}(y)\}= -\theta_{r_1}(y) \quad \text{ and } \quad \min\{\theta_{r_1}(y), \theta+\theta_{r_2}(y)\}=\theta_{r_1}(y).$$
\qed
\end{proof}

\begin{proof}[Theorem~\ref{thm:clustering}]
Let $\beta:=2/3$ and for a graph $G=(V, E)$ let
$$X:=\sum_{\stackrel{v\in V}{ deg(v)\geq 2}} \frac{\left|\{\{u_1, u_2\}\in E~|~u_1, u_2\in \Gamma(v)\} \right|}{\binom{deg(v)}{2}}$$ 
and 
$$Y:=\sum_{\stackrel{v\in O(\beta)}{ deg(v)\geq 2}} \frac{\left|\{\{u_1, u_2\}\in E~|~u_1, u_2\in \Gamma(v)\cap O(\beta)\} \right|}{\binom{deg(v)}{2}}.$$
Clearly, $\overline{c}=\frac{X}{n}$ and $X\geq Y$.
It therefore suffices to derive a constant lower bound on $\mathbb{E}[Y]$ and to show that $Y$ is concentrated around its expectation. Let $\mathbb{E}[Y_r~|~\mathcal{E}]$ be the expected value of $\frac{\left|\{\{u_1, u_2\}\in E~|~ u_1, u_2\in \Gamma(v)\cap O(\beta)\} \right|}{\binom{deg(v)}{2}}$ for a vertex $v$ with radius $r$ conditioned on the event $\mathcal{E}$ that the vertex has degree at least $2$. We observe that $\mathbb{E}[Y_r~|~\mathcal{E}]$ is exactly the probability that two randomly chosen neighbors $u_1$ and $u_2$ of a vertex $v$ at radius $r$ are connected. 
In order to derive this probability for a fixed vertex $v$ at radius $r$, let us suppose that $u_1$ is at coordinate $(y, \phi)\in B_0(R)\cap B_r(R)\setminus B_0(\beta R)$. Note that by \eqref{eq:def_theta_y} these coordinates satisfy $\beta R\le y \le R$ and $-\theta_r(y)\le \phi \le \theta_r(y)$. Moreover, the probability for the event that $u_1$ is at $(y, \phi)$ is given by
$$\frac{f(y)}{\mu(B_0(R)\cap B_r(R))}.$$ 
The vertex $u_2$ is connected to $u_1$ in such a way that $\{u_1, u_2\}$ contributes to $Y_r$ only if $u_2$ lies in the intersection of the balls $B_0(R)\cap B_r(R)\setminus B_0(\beta R)$ and $B_0(R)\cap B_y(R)\setminus B_0(\beta R)$. Therefore, the contribution of $u_2$ to $\mathbb{E}[Y_r~|~\mathcal{E}]$, given the coordinates of $u_2$, is
$$\frac{\mu(B_{r,0}(R)\cap B_{y, \phi}(R)\cap B_0(R)\setminus B_0(\beta R) )}{\mu(B_0(R))\cap B_r(R))}.$$
Note that we choose $u_2$ uniformly from all neighbors of $v$ which makes it possible that $u_1=u_2$. However, since the degree of $v$ is at least $2$ this event happens with probability at most $1/2$.  
Putting all the above facts together implies that
\begin{align*}
\mathbb{E}[Y_r~|~\mathcal{E}]&\geq\frac{1}{2}\int_{\beta R}^R \int_{-\theta_r(y)}^{\theta_r(y)}  \frac{f(y)\mu(B_{r,0}(R)\cap B_{y, \phi}(R)\cap B_0(R)\setminus B_0(\beta R) )}{(\mu(B_0(R)\cap B_r(R)))^2} d\phi dy.
\end{align*}
Since the term in the integral above does not depend on the angle, we can replace the integral from $-\theta_r(y)$ to $\theta_r(y)$ by twice the integral from $0$ to $\theta_r(y)$. Further, we derive a lower bound on that term by integrating the radius only from $r$ to $R$. Observe also that for $r\le y \le R$ the upper boundary of the angle, $\theta_r(y)\stackrel{(\text{Lem.~\ref{lem:theta}})}{=}(1+o(1))2e^{\frac{R-r-y}{2}}$, is at least $\xi:=e^{-\frac{r}{2}}-e^{-\frac{y}{2}}$ and that therefore the term above is at least
 \begin{align*}
 &\frac{ \int_r^R\int_0^{\xi}f(y)\cdot \mu(B_{r,0}(R)\cap B_{y, \phi}(R)\cap B_0(R)\setminus B_0(\beta R) ) d\phi dy}{(\mu(B_0(R)\cap B_r(R)\setminus B_0(\beta R)))^2} .
 \end{align*}
 Applying Lemma~\ref{lem:double_intersection}, this integral simplifies to
 \begin{align*}
& \frac{1}{(\mu(B_0(R)\cap B_r(R)\setminus B_0(\beta R)))^2} \int_r^R\int_0^{\xi}f(y)\cdot \mu(B_0(R)\cap B_y(R)\setminus B_0(\beta R)) d\phi dy\\
&\hspace{2cm}\stackrel{\mathclap{\text{(Lem.~\ref{lem:intersection_area}), \eqref{eq:approx_cosh}}}}{\geq}\quad\quad \frac{\alpha-1/2}{12}e^{r-\alpha R} \int_r^{R}(e^{-r/2}-e^{-y/2})e^{y(\alpha-1/2)}d\phi dy\\
&\hspace{2cm} =  \frac{1}{12} \left( e^{r/2-\alpha R} \left[e^{y(\alpha-1/2)} \right]_r^R -\frac{\alpha-1/2}{(\alpha-1)}e^{r-\alpha R}\left[e^{y(\alpha-1)}\right]_r^R\right)\\
&\hspace{2cm}\geq  \frac{1}{24}\left(e^{-(R-r)/2} +\frac{1}{2(\alpha-1)}e^{-\alpha(R-r)}-\frac{\alpha-1/2}{\alpha-1}e^{-(R-r)} \right).
\end{align*}
Let $v_R$ be a vertex at radius $R$. It follows from Lemma~\ref{lem:monotone} that the degree distribution of every vertex in the graph dominates the degree distribution of $v_R$. Therefore, for any $v\in V$
\begin{align*}
\Pr[\mathcal{E}]&=\Pr[deg(v)\geq 2]\geq \Pr[deg(v_R)\geq 2]\geq \Pr[deg(v_R)=2]\\
&= \binom{n-1}{2}(\mu(B_0(R)\cap B_R(R)))^2 (1-\mu(B_0(R)\cap B_R(R)))^{n-3}\\
& \geq n^2 e^{-R}\frac{\alpha^2}{\pi^2(\alpha-1/2)^2}e^{-\frac{2\alpha e^{-R/2}}{\pi(\alpha-1/2)}n}=e^{-C}\frac{\alpha^2}{\pi^2(\alpha-1/2)^2}e^{-\frac{2\alpha e^{-C/2}}{\pi(\alpha-1/2)}}.
\end{align*}
By integrating $\mathbb{E}[Y_r~|~\mathcal{E}]$ over all $\beta R \le r \le R$ and multiplying with $\Pr[\mathcal{E}]$, $f(y)$ and $n$ we get the expected value of $Y$ 
\begin{align}
\label{eq:global_clustering}
\mathbb{E}[Y]&= n \int_{\beta R}^R f(y)\cdot \Pr[\mathcal{E}]\cdot \mathbb{E}[\overline{c_r}~|~\mathcal{E}] dr\geq \frac{n\cdot e^{-C} \alpha^2e^{-\frac{2\alpha e^{-C/2}}{\pi(\alpha-1/2)}}}{600\pi^3(\alpha-1/2)(\alpha+1)(\alpha+1/2)}.
\end{align}
Thus, $\mathbb{E}[Y] =\Theta(n).$

Set $f:= Y, t:=n^{6/7}$ and `bad' event $\mathcal{B}$ and $c$ as stated in Lemma~\ref{lem:lipschitz}. Note that each coordinate change can influence $f$ by at most $c_i:=cn^{1-\beta}+1$ as long as $\mathcal{\bar{B}}$ holds. It therefore follows from Theorem~\ref{lem:concentration} and  Lemma~\ref{lem:lipschitz} that $\Pr\left[X\le \mathbb{E}[Y]-n^{6/7}-Pr[\mathcal{B}]\right]=o(1)$ and therefore that the clustering  coefficient is with high probability at least $\mathbb{E}[Y]/n=\Theta(1)$.
\qed
\end{proof}

\subsection{Vertices of Small Degree }

In this section we prove the first part of Theorem~\ref{thm:degree_seq}. Before we give all technical details, let us describe briefly the main proof idea. Given the estimates in the previous sections, in particular Lemma~\ref{lem:intersection_area}, it is conceptually not very difficult to compute the expected number of vertices of degree $k$ in $G_{\alpha, C}(n)$. Nevertheless, it is not clear how to show the claimed strong concentration bound. To this end, we will apply the same large deviation inequality as in the previous section. 

Recall our partitioning of the vertex set of $G_{\alpha, C}(n)$ into the \emph{inner set} $I = I(\beta)$ and the \emph{outer set} $O = O(\beta)$. Moreover, let $e(I,O)$ count the number of edges with one endpoint in $I$ and the other in $O$. The next two lemmas show that $e(I,O)$ and $|I|$ are small. This indicates that most of the vertices of degree $k$, for not too large $k$, will lie in $O$. Let $D_k$ denote the number of vertices in $O$ which have degree $k$ in $O$, i.e., set 
$$D_k(\beta) = \left|\big\{v\in O(\beta) ~\big|~ |N(v) \cap O(\beta)| = k \big\}\right|.$$
In Lemma~\ref{lem:degree_sequence} we derive the expectation of $D_k(\beta)$ and finally, we combine everything to show that $D_k$ is tightly concentrated around its expectation. 

\begin{lemma}
\label{lem:few_vertices_I}
In $G_{\alpha, C}(n)$, with  probability at least $1 - e^{-n^{\Omega(1)}}$
$$|I(\beta)| \le \max\{n^{1/2}, 4en^{1-2\alpha (1-\beta)}e^{-\alpha C(1-\beta)}\}.$$
\end{lemma}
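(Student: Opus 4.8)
The plan is to observe that $|I(\beta)|$ is a binomial random variable, pin down its mean using Lemma~\ref{lem:intersection_area}, and then apply a Chernoff tail bound, splitting into two regimes according to which term of the maximum dominates.

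First I would note that by the definition of $G_{\alpha,C}(n)$ the events $\{r_v\le \beta R\}$, $v\in V$, are independent and each has probability $p:=\mu\bigl(B_0(\beta R)\bigr)$, since $B_0(\beta R)$ is precisely the set of points at radius at most $\beta R$. Hence $|I(\beta)|\sim \Bin(n,p)$. By~\eqref{eq:ball-area} of Lemma~\ref{lem:intersection_area} applied with $x=\beta R$, together with $R=2\log n+C$,
\[
p = \mu\bigl(B_0(\beta R)\bigr) = (1+o(1))\,e^{-\alpha(1-\beta)R} = (1+o(1))\,n^{-2\alpha(1-\beta)}e^{-\alpha C(1-\beta)},
\]
so $\lambda:=np = (1+o(1))M$ with $M:=n^{1-2\alpha(1-\beta)}e^{-\alpha C(1-\beta)}$. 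I will use the standard estimate $\Pr[\Bin(n,p)\ge t]\le \binom{n}{t}p^t\le (e\lambda/t)^t$, which is effective whenever $t>e\lambda$.

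Then I would split into two cases. If $M\ge n^{1/2}/(4e)$, choose $t:=4eM$; then $t\ge n^{1/2}$ and $t/(e\lambda)=4/(1+o(1))\ge 2$ for $n$ large, so $\Pr[|I(\beta)|\ge t]\le 2^{-4eM}\le 2^{-n^{1/2}}=e^{-\Omega(n^{1/2})}$. If instead $M<n^{1/2}/(4e)$, choose $t:=n^{1/2}$; then $t/(e\lambda)=n^{1/2}/\bigl(e(1+o(1))M\bigr)\ge 4/(1+o(1))\ge 2$ for $n$ large, so $\Pr[|I(\beta)|\ge n^{1/2}]\le 2^{-n^{1/2}}=e^{-\Omega(n^{1/2})}$. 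In either case $|I(\beta)|\le \max\{n^{1/2},\,4eM\}$ with probability $1-e^{-\Omega(n^{1/2})}=1-e^{-n^{\Omega(1)}}$, which is exactly the claim.

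This is essentially a routine Chernoff argument, so there is no serious obstacle. The only point requiring a little care is the boundary regime where $M$ is of order $n^{1/2}$, so that neither term of the maximum is obviously the larger one; this is why the constant $4e$ (rather than, say, $2$) is used, namely to leave enough multiplicative slack that the tail bound still beats $t=4eM$. One should also check that the estimate for $p$ is valid uniformly over the whole range $0<\beta<1$ — in particular when $1-2\alpha(1-\beta)\le 0$ and $M=o(1)$, where the bound $|I(\beta)|\le n^{1/2}$ is comfortably loose — and that the resulting failure probability is $e^{-n^{\Omega(1)}}$ in all cases.
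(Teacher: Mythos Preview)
Your proof is correct and follows essentially the same route as the paper: identify $|I(\beta)|$ as $\Bin(n,\mu(B_0(\beta R)))$, use Lemma~\ref{lem:intersection_area} to bound the mean by $2n^{1-2\alpha(1-\beta)}e^{-\alpha C(1-\beta)}$, and apply a multiplicative Chernoff bound with threshold $t=\max\{n^{1/2},4eM\}$. The only cosmetic difference is that the paper applies the Chernoff bound directly at $t$ (noting $t\ge 2e\,\Exp[|I|]$ so $\Pr[|I|>t]\le 2^{-t}$), whereas you make the two-case split explicit.
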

\begin{proof}
The number of vertices in $I$ is distributed like $\textrm{Bin}(n, \mu\bigl(B_0(\beta R)\bigr))$. Therefore the expected number of vertices in $I$ is (by  Lemma~\ref{lem:intersection_area}) bounded by 
$$\mathbb{E}[|I(\beta)|]\le  n \mu\bigl(B_0(\beta R)\bigr) = (1+o(1))ne^{-\alpha(R-\beta R)}\le 2n^{1-2\alpha (1-\beta)}e^{-\alpha C(1-\beta)}$$
and it follows by the Chernoff bound that for $t=\max\{n^{1/2}, 4en^{1-2\alpha (1-\beta)}e^{-\alpha C(1-\beta)}\}$
$$\Pr[|I(\beta)| >t]\le 2^{-t}=e^{-n^{\Omega(1)}}.$$
\end{proof}

\begin{lemma}
\label{lem:few_edges_IO}
Let $\varepsilon>0$ and $0<\beta<1$. Then it holds with high probability that
$$e(I(\beta),O(\beta))= O\left( n^{1-(2\alpha-1)(1-\beta)}\log n\right).$$
\end{lemma}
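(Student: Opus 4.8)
The plan is to bound $e(I,O)$ by its expectation plus a concentration argument, just as in the proof of Lemma~\ref{lem:lipschitz}. First I would compute $\mathbb{E}[e(I,O)]$. Each edge between $I$ and $O$ is determined by a pair of vertices, one with radius at most $\beta R$ and one with radius larger than $\beta R$. By Lemma~\ref{lem:monotone}, the measure of the neighborhood ball of the inner vertex is maximized when that vertex sits at the center, so for any inner vertex $u$ and any outer vertex $v$ with radius $r_v \ge \beta R$ the probability they are adjacent is at most $\mu(B_0(R) \cap B_{\beta R}(R) \setminus B_0(\beta R))$. Actually it is cleaner to bound the expected number of $I$--$O$ edges incident to a fixed vertex $v \in O$ as in Lemma~\ref{lem:lipschitz}: it is at most $n \cdot \mu(B_0(R)\cap B_{\beta R}(R) \setminus B_0(\beta R)) = O(n e^{-\beta R/2}) = O(n^{1-\beta})$; but that bound is too weak. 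Instead I would condition on the outer vertex $v$ having a specific radius $r$, use $\mu(B_0(R) \cap B_r(R) \cap B_0(\beta R)) \le \mu(B_0(\beta R)) = O(e^{-\alpha(1-\beta)R})$ (this is the measure of the inner disk, which certainly contains all inner neighbors of $v$), multiply by $n$ to get the expected number of inner neighbors of $v$, and then multiply by $n$ again and integrate the density $p(r)$ over $r \in [\beta R, R]$ to get $\mathbb{E}[e(I,O)] = O(n^2 e^{-\alpha(1-\beta)R})$. Plugging $R = 2\log n + C$ gives $O(n^{2 - 2\alpha(1-\beta)})$, which does not obviously match the claimed $n^{1-(2\alpha-1)(1-\beta)}\log n$, so the bound must be sharper.

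The right way to get the sharper bound is to not throw away the cap: the relevant region for an inner neighbor of an outer vertex $v$ of radius $r$ is $B_0(R) \cap B_r(R) \cap B_0(\beta R)$, and since $r \ge \beta R$ we have $r + \beta R \ge 2\beta R$; if $2\beta R \ge R$, i.e.\ $\beta \ge 1/2$, then by the triangle inequality only points with radius $\ge R - r$ can be in $B_r(R)$, and combined with the cap at radius $\le \beta R$ the relevant shell is $R - r \le y \le \beta R$. Integrating $f(y)\theta_r(y)$ over this shell using Lemma~\ref{lem:theta} and Lemma~\ref{lem:intersection_area} (this is exactly the quantity $\mu(B_r(R) \cap B_0(R) \cap B_0(\beta R))$, computable as $\mu(B_0(R-r))$ plus a $\theta$-integral from $R-r$ to $\beta R$, or as $\mu(B_r(R)\cap B_0(R)) - \mu((B_r(R)\cap B_0(R))\setminus B_0(\beta R))$ via \eqref{eq:massComp}) yields a quantity of order $e^{-r/2} \cdot e^{-(\alpha - 1/2)(R - \beta R)}$ per inner neighbor probability; the extra factor $e^{-(\alpha-1/2)(1-\beta)R}$ is precisely what improves the exponent. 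Then $\mathbb{E}[e(I,O)] \le n^2 \int_{\beta R}^{R} p(r)\, \mu(B_r(R)\cap B_0(R)\cap B_0(\beta R))\, dr$; using $p(r) = \Theta(\alpha e^{-\alpha(R-r)})$ and the bound above, the $r$-integral is dominated by its behavior and gives $O(n^2 e^{-R/2} e^{-(\alpha-1/2)(1-\beta)R} \cdot (\text{poly in }R))$. Since $n^2 e^{-R/2} = n^2 \cdot e^{-C/2}/n = \Theta(n)$ and $e^{-(\alpha-1/2)(1-\beta)R} = \Theta(n^{-(2\alpha-1)(1-\beta)})$, this gives $\mathbb{E}[e(I,O)] = O(n^{1-(2\alpha-1)(1-\beta)}\log n)$, matching the claim (the $\log n$ absorbing the polynomial-in-$R$ factor from integrating against the $e^{-\alpha(R-r)}$ and $e^{-(\alpha-1/2)r}$ terms, whichever edge case of $\alpha$ versus $1/2$, $1$ arises).

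For concentration, I would apply Markov's inequality directly: since $\mathbb{E}[e(I,O)] = O(n^{1-(2\alpha-1)(1-\beta)}\log n) =: m$, we get $\Pr[e(I,O) > \omega(n) \cdot m / \log n] = o(1)$ for any slowly growing $\omega$, which already gives a "with high probability" bound of the stated order up to the $\log n$ slack — in fact Markov gives $e(I,O) = O(m \cdot g(n))$ w.h.p.\ for any $g(n) \to \infty$, and one absorbs this into the statement since the claimed bound itself carries a $\log n$ factor; more carefully, one chooses the constant in the $O(\cdot)$ and notes $\Pr[e(I,O) > c' n^{1-(2\alpha-1)(1-\beta)}\log n] \le \mathbb{E}[e(I,O)]/(c' n^{1-(2\alpha-1)(1-\beta)}\log n) = O(1/\log n) = o(1)$ once $c'$ exceeds the implicit constant in the expectation bound. (If a stronger concentration than $o(1)$ is wanted, one could instead use the Azuma-type inequality of Theorem~\ref{lem:concentration} with the bad event of Lemma~\ref{lem:lipschitz}, but Markov suffices for "with high probability.")

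The main obstacle I anticipate is the bookkeeping in the expectation computation: one must correctly identify the region $B_r(R) \cap B_0(R) \cap B_0(\beta R)$ as a thin shell $R - r \le y \le \beta R$ (valid because $\beta > 1/2$ forces $R - r \le \beta R$ for $r \ge \beta R$), and then carefully integrate $f(y)\theta_r(y)$ over this shell and subsequently over $r \in [\beta R, R]$ against the radial density, keeping track of which of the exponents $\alpha$, $\alpha - 1/2$, $\alpha - 1$, $\alpha - 3/2$ dominates so that the polynomial-in-$R$ prefactors end up absorbed into the single $\log n$. The geometry is exactly what Lemma~\ref{lem:intersection_area} was built to handle — equation \eqref{eq:massComp} with $x = \beta R$ essentially gives the complementary quantity — so the work is to subtract correctly and simplify, not to prove anything genuinely new.
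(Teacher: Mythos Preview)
Your approach is correct and ends, like the paper, with Markov's inequality, but the expectation bound is obtained by a genuinely different route. The paper conditions on the \emph{inner} vertex. It first observes that with high probability no vertex has radius below $r_0 := (1-\tfrac{1}{2\alpha})\log n + C$ (the expected number of such vertices is $o(1)$), so it suffices to bound the edges from $I' := I\cap\{r\ge r_0\}$ to $O$. For an inner vertex at radius $r\in[r_0,\beta R]$, Lemma~\ref{lem:intersection_area} gives its expected outer degree as simply $O(ne^{-r/2})$, with no shell computation needed; integrating $n^2 e^{-r/2}p(r)$ over $[r_0,\beta R]$ yields $O(n^{1-(2\alpha-1)(1-\beta)})$ directly. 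Your route via the outer vertex also works but costs more: it is clean only for $\beta>1/2$ (the lemma is stated for all $0<\beta<1$, and the paper's argument is indifferent to this), and extracting $\mu(B_r(R)\cap B_0(\beta R))$ by subtracting \eqref{eq:massComp} from \eqref{eq:ball-area-capped} forces you to verify that the $O(e^{-(\alpha-1/2)r}+e^{-r})$ error terms do not swamp the difference $e^{-(\alpha-1/2)(1-\beta)R}$; they can be pointwise comparable for some $(\alpha,\beta)$, though after integrating against $p(r)$ they are indeed dominated. The truncation at $r_0$ sidesteps all of this.

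Two minor slips worth fixing: your triangle-inequality remark is inverted (points with $y\le R-r$ are \emph{automatically} in $B_r(R)$, so $\theta_r(y)=\pi$ there; the nontrivial shell is $R-r\le y\le\beta R$), and the $\log n$ in the statement comes solely from Markov, not from the integration --- the expectation itself is $O(n^{1-(2\alpha-1)(1-\beta)})$ with no logarithmic factor.
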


\begin{proof}
Let $r_0:=\left(1-\frac{1}{2\alpha}\right)\log n+C$ and note that by Lemma~\ref{lem:intersection_area} the expected number of vertices of radius at most $r_0$ is at most
$$n\mu(B_0(r_0))=(1+o(1))ne^{-\alpha(R-r_0)}=ne^{-(\alpha+1/2)\log n} =o(1).$$
We therefore have with high probability no vertex of radius at most $r_0$. For all $r_0\le r\le \beta R$ we get by Lemma~\ref{lem:intersection_area} that the expected degree in $O$ of a vertex at radius $r$ is at most
\begin{equation}
\label{eq:expected_o_deg_of_r}
n\mu(B_r(R)\cap B_0(R)\setminus B_0(\beta R)) = O\left(ne^{-r/2}\right).
\end{equation}
We now integrate over \eqref{eq:expected_o_deg_of_r} in $I$ to bound the expected number of edges between $I$ and $O$. Recall that there are with high probability no vertices of radius at most $r_0$ and it therefore suffices to integrate from $r_0$ to $\beta R$ 
\begin{align*}O\left(n^2\int_{r_0}^{\beta R} e^{-r/2}p(r)dr \right) &= O\left(n^{2}e^{-\alpha R}\int_0^{\beta R}e^{(\alpha-1/2)r}dr \right)=O\left(n^{2-2\alpha+2\beta(\alpha -1/2)} \right)\\
&=O\left( n^{1-(2\alpha-1)(1-\beta)}\right).
\end{align*}
The lemma follows by Markov's inequality. 

\end{proof}

In the next lemma we derive the expected degree sequence of the subgraph spanned by $O$.

\begin{lemma}
\label{lem:degree_sequence}
Let $\alpha > 1/2, C\in \mathbb{R}$, and $\max\left\{3/5, 1/(2\alpha)\right\} < \beta <1$. Set $\delta:=\min\{2(2\beta-1), 1/2\}$. Then, for all $0 \le k = o(n^\delta)$ we have that
$$\mathbb{E}[D_k(\beta)]=  \bigl(1+o(1)\bigr)\frac{2n\alpha e^{-\alpha C}}{k!} \Bigl(\frac{2\alpha}{\pi(\alpha-1/2)}\Bigr)^{2\alpha}\left( \Gamma(k-2\alpha)-\int_{0}^{\xi} t^{k-2\alpha-1}e^{-t}dt  \right),$$
where
$
	\xi = \frac{2\alpha}{\pi(\alpha - 1/2)} e^{-C/2}.
$
\end{lemma}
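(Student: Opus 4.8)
The plan is to compute $\mathbb{E}[D_k(\beta)]$ by conditioning on the radial coordinate $r$ of a fixed vertex $v$, writing the probability that $v$ has exactly $k$ neighbours in $O(\beta)$ as a binomial probability, and then integrating over $r$ with the density $p(r)$. Concretely, by linearity of expectation
\[
  \mathbb{E}[D_k(\beta)] = n\int_{\beta R}^{R} \binom{n-1}{k}\, m(r)^k\,(1-m(r))^{n-1-k}\, p(r)\, dr,
\]
where $m(r) := \mu\bigl((B_r(R)\cap B_0(R))\setminus B_0(\beta R)\bigr)$ is the mass of the region in which a point is both a neighbour of $v$ and lies in $O(\beta)$. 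Since $r \ge \beta R$ and $\beta > 1/(2\alpha)$, equation \eqref{eq:massComp1} of Lemma~\ref{lem:intersection_area} applies (here $\beta R \le R - r$ fails in general, so one must be a little careful: for $r$ close to $R$ one uses \eqref{eq:massComp} instead, but the correction term there is of lower order for the range of $k$ considered), giving $m(r) = \frac{2\alpha e^{-r/2}}{\pi(\alpha-1/2)}(1+o(1))$ uniformly. The first step is thus to substitute this estimate and approximate $\binom{n-1}{k} \sim n^k/k!$ and $(1-m(r))^{n-1-k} \sim e^{-n\,m(r)}$, which is valid as long as $k = o(n^\delta)$ keeps the relevant error terms negligible.

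Next I would perform the change of variables $t = n\, m(r) = (1+o(1))\,\frac{2\alpha}{\pi(\alpha-1/2)}\, n\, e^{-r/2}$. Writing $A := \frac{2\alpha}{\pi(\alpha-1/2)}$, we have $e^{-r/2} = t/(An)$, so $r = 2\log(An) - 2\log t = R + 2\log A + C' - 2\log t$ up to the relation $R = 2\log n + C$; hence $e^{-r/2}\,dr \propto dt/t$ and, crucially, $p(r)\,dr = \alpha \frac{\sinh(\alpha r)}{\cosh(\alpha R)-1}\,dr = (1+o(1))\,\alpha e^{-\alpha(R-r)}\,dr$, and $e^{-\alpha(R-r)} = (t/(A\,e^{-C/2}))^{-2\alpha}\cdot(\text{constant})$ after plugging in $R-r = 2\log t - 2\log A + C/2 + o(1)$. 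Collecting the powers of $t$, the integrand becomes (constant)$\cdot t^{k} \cdot e^{-t}\cdot t^{-2\alpha}\cdot t^{-1}\,dt = (\text{const})\,t^{k-2\alpha-1}e^{-t}\,dt$. The limits $r \in [\beta R, R]$ transform into $t \in [\xi', t_{\max}]$ where the lower endpoint corresponds to $r = R$, giving $t = n\,m(R) = (1+o(1))\,A\,e^{-C/2} = (1+o(1))\xi$, and the upper endpoint $r = \beta R$ gives $t = \Theta(n^{1-\beta/2}) \to \infty$. So the integral becomes $(1+o(1))\int_{\xi}^{\infty} t^{k-2\alpha-1}e^{-t}\,dt = \Gamma(k-2\alpha) - \int_0^{\xi} t^{k-2\alpha-1}e^{-t}\,dt$, and tracking the constant prefactor (the $n$ out front, the $n^k/k!$ from the binomial, the $e^{-\alpha C}$ and $A^{2\alpha}$ from the Jacobian) yields exactly the claimed expression.

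The main obstacle is controlling the error terms uniformly over the whole range $0 \le k = o(n^\delta)$ and over $r \in [\beta R, R]$ simultaneously. Two issues deserve care: (i) the approximation $(1-m(r))^{n-1-k} = e^{-n\,m(r)}(1+o(1))$ requires $n\,m(r)^2 = o(1)$ and $k\,m(r) = o(1)$, which dictates how large $k$ may be and is where the constraint $\delta = \min\{2(2\beta-1), 1/2\}$ comes from — near $r = \beta R$ the mass $m(r)$ is of order $n^{-\beta/2}$, so $k$ must stay below roughly $n^{\beta/2}$, and the more delicate region is actually the small-$t$ (large-$r$) end where for $k < 2\alpha$ the integrand $t^{k-2\alpha-1}$ blows up at $0$, so one must verify that $\xi > 0$ keeps $\int_0^\xi$ finite and that the lower limit is genuinely $(1+o(1))\xi$ and not something that would shift the (convergent but $\xi$-sensitive) integral; (ii) the multiplicative $(1\pm O(e^{-(\alpha-1/2)r} + e^{-r}))$ error in $m(r)$ gets raised to the $k$-th power and exponentiated against $n$, so one needs $k \cdot O(e^{-(\alpha-1/2)\beta R}) = o(1)$ and $n\cdot m(r)\cdot O(e^{-(\alpha-1/2)r}) = o(1)$; the worst case is again $r \approx \beta R$, and checking these inequalities against $\beta > \max\{3/5, 1/(2\alpha)\}$ and $\delta = \min\{2(2\beta-1),1/2\}$ is the technical heart of the argument. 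Everything else is a routine (if lengthy) bookkeeping of constants through the substitution.
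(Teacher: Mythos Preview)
Your proposal is correct and follows essentially the same route as the paper: write $\mathbb{E}[D_k(\beta)]$ as the integral of a binomial probability against $p(r)$, replace the binomial by a Poisson via the estimates $\binom{n-1}{k}\sim n^k/k!$ and $(1-m(r))^{n-1-k}\sim e^{-n m(r)}$, and then substitute $t=n m(r)$ to obtain the incomplete Gamma integral. Two small slips worth fixing: since $\beta>1/2$ we have $\beta R\ge R-r$ for \emph{all} $r\in[\beta R,R]$, so it is always \eqref{eq:massComp} (not \eqref{eq:massComp1}) that applies; and $m(r)$ at $r=\beta R$ is of order $n^{-\beta}$, not $n^{-\beta/2}$ --- the paper packages the error control as the three conditions $q_r f_r=o(1/n)$, $q_r^2=o(1/n)$, $f_r k=o(1)$, from which the constraints $\beta>3/5$ and $k=o(n^{\min\{2(2\beta-1),1/2\}})$ drop out directly.
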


\begin{proof}
  Let \(p = (r, \theta)\) be an arbitrary fixed vertex with \(r > \beta R\).
  Denote with \(\bar{q}_r\) the probability that a random vertex \(p' = (r',
  \theta')\) has radius at least \(\beta R\) and distance at most \(R\) from
  \(p\). We have that
  \begin{equation*}
    \bar q_r = \mu\bigl((B_r(R) \cap B_0(R))\setminus B_{0}(\beta R)\bigr) .
  \end{equation*}
  The probability that \(p\) has \(k\) neighbors with radius larger than $\beta R$ then corresponds to
  the probability that a binomial random variable \(\Bin(n-1,\bar{q}_r)\) has
  value \(k\). Therefore the expected value of \(D_k\) can be computed by
  \begin{equation}
    \label{eq:deg_seq_exp}
    \mathbb{E}[D_k] =  n\int_{\beta R}^R
    \binom{n-1}{k}\bar{q}_r^k(1-\bar{q}_r)^{n-1-k} p(r) dr .
  \end{equation}
  By Lemma \ref{lem:intersection_area}, Equation~\eqref{eq:massComp}, and the observation that $\Theta(e^{-2\alpha \beta R})=O(e^{-r})$ it follows that
  \begin{equation*}
    \bar q_r = \frac{2\alpha e^{-r/2}}{\pi(\alpha-1/2)} \Bigl(
    1-e^{-(\alpha-1/2)(1-\beta)R}\Bigr)\Bigl(1 \pm O\bigl(e^{-r}+e^{-r-(\alpha-3/2)(1-\beta)R}\bigr)
    \Bigr).
  \end{equation*}
  Recall that \(R = 2\log n+C\) and
  therefore \(e^{-r} \leq e^{-\beta R} = o(1/n)\).  
  To approximate the integral in Equation~\eqref{eq:deg_seq_exp} we separate
  the main and error terms as follows. Write
  \begin{equation*}
    q_r = \frac{2\alpha e^{-r/2}}{\pi(\alpha-1/2)} \Bigl(
    1-e^{-(\alpha-1/2)(1-\beta)R}\Bigr) \quad\text{and}\quad
    f_r = O\bigl(e^{-r-(\alpha-3/2)(1-\beta)R}\bigr)+o(1/n) .
  \end{equation*}
	Note that $\bar q_r = q_r (1 \pm f_r) $.
	Before we proceed with the estimation of the expression in~\eqref{eq:deg_seq_exp} let us prove some auxiliary facts. For all~\(r > \beta R\) and all~\(\alpha > 1/2\) we claim to have the following
  properties for \(q_r\), \(f_r\) and \(k\):
\begin{equation}
\label{eq:aux_prop}
  \textrm{i)}~q_r f_r = o(1/n),
  \qquad\textrm{ii)}~(q_r)^2 = o(1/n),
  \qquad\textrm{ and \qquad iii)}~f_r k = o(1).
\end{equation}
  Property i) is established by observing that whenever $\beta > 3/5$
  \begin{equation*}
    q_rf_r = O\bigl(e^{-(\alpha-3/2)(1-\beta)R-r/2-r}\bigr)
    \stackrel{(r \ge \beta R)}{=} O\bigl(e^{-(\alpha-3/2)(1-\beta)R-{3\beta R}/{2}}\bigr)
    = O\bigl(n^{3 - 6\beta - 2\alpha + 2\alpha\beta}\bigr) = o(1/n).
  \end{equation*}
  The second claimed property is true because \((q_r)^2 = O(e^{-\beta R}) = O(n^{-2\beta})=O(n^{-6/5})\),
  while the third is satisfied by our choice of \(k=o(n^{2(2\beta-1)})\). We claim that these facts imply
  \begin{equation}\label{eq:degseq-approx-integral}
    n\int_{\beta R}^R
    \binom{n-1}{k}(\bar{q}_r)^k(1-\bar{q}_r)^{n-1-k} p(r) dr =
    \bigl(1+o(1)\bigr)\frac{n}{k!}
    \int_{\beta R}^R
    (n q_r)^ke^{-n q_r} p(r) dr.
  \end{equation}
  Consider first the binomial coefficient. Since $k \ll \sqrt{n}$ we have
  \begin{equation*}
    \binom{n-1}{k} = \frac{(n-1)^k}{k!} \, \prod_{i=0}^{k-1}\left(1 - \frac{i}{n-1}\right) = \bigl(1+o(1)\bigr)\frac{n^{k}}{k!}.
  \end{equation*}
  Next we consider the term \((\bar q_r)^k\). By using the third property in~\eqref{eq:aux_prop} we obtain that
  \begin{equation*}
    (\bar q_r)^k = (q_r(1+f_r))^k = (q_r)^k\bigl(1+\Theta(f_r k)\bigr) =
    (1+o(1))(q_r)^k.
  \end{equation*}
  Finally, we have that
  \begin{equation*}
    \begin{split}
      (1-\bar q_r)^{n-1-k} &~~=~~ (1-q_r)^{n-1-k}\bigl(1 +
      q_rf_r/(1-q_r)\bigr)^{n-1-k} \\
      &~~=~~ e^{(n-1-k)\log(1-q_r)}(1+\Theta(n q_rf_r))\\
      &\stackrel{\eqref{eq:aux_prop}, i)}{=} e^{-n q_r+O((n-1-k)q_r^2)+(k+1)q_r}\bigl(1+o(1)\bigr)\\
      &\stackrel{\eqref{eq:aux_prop}, ii)}{=} e^{-n q_r}(1+o(1)).
    \end{split}
  \end{equation*}
  This completes the proof of~\eqref{eq:degseq-approx-integral}. We now estimate the integral on the right hand side of
  Equation~\eqref{eq:degseq-approx-integral}. To do so we first perform a
  variable transformation \(t = nq_r\). Then, the density \(p(r)\)
  can be expressed as
  \begin{equation*}
    \begin{split}
      p(r) &= \frac{\alpha\sinh(\alpha r)}{\cosh(\alpha R)-1} =
      \frac{\alpha}{2(\cosh(\alpha R)-1)}\bigl(e^{\alpha r}- e^{-\alpha
        r}\bigr)\\
      &= \frac{\alpha}{2(\cosh(\alpha R)-1)}
      \left((nq_r)^{-2\alpha}n^{2\alpha}
      \left(\frac{2\alpha \left(
    1-e^{-(\alpha-1/2)(1-\beta)R}\right) }{\pi(\alpha-1/2)}\right)^{2\alpha}-o(1)\right)\\
  \end{split}
  \end{equation*}
Since \(1/(\cosh(\alpha R)-1) =
  2n^{-2\alpha}e^{-\alpha C}\bigl(1+o(1/n)\bigr)\), the above calculation yields that
\[
	p(r) = \alpha e^{-\alpha C} \Bigl(\frac{2\alpha}{\pi(\alpha-1/2)}\Bigr)^{2\alpha}
      (nq_r)^{-2\alpha}\bigl(1+o(1)\bigr).
\]
Further we have that
  \begin{equation*}
    dt = n\frac{2\alpha\left(
    1-e^{-(\alpha-1/2)(1-\beta)R}\right)}{\pi(\alpha-1/2)}(-1/2)e^{-r/2}dr =
    -\frac{1}{2}t dr\quad\Rightarrow\quad dr = -2t^{-1}dt,
  \end{equation*}
  and for the upper and lower bounds that
  \begin{equation*}
    \begin{split}
      r_0 = \beta R \quad&\longrightarrow\quad t_0 = n
      \frac{2\alpha\left(
    1-e^{-(\alpha-1/2)(1-\beta)R}\right)}{\pi(\alpha-1/2)} e^{-\beta R/2} =
      \frac{2\alpha\left(
    1-e^{-(\alpha-1/2)(1-\beta)R}\right)}{\pi(\alpha-1/2)} e^{-\frac{\beta C}{2}}n^{1-\beta}\\
      r_1 = R \quad&\longrightarrow\quad t_1 = n
      \frac{2\alpha\left(
    1-e^{-(\alpha-1/2)(1-\beta)R}\right)}{\pi(\alpha-1/2)} e^{-R/2} =
      \frac{2\alpha\left(1-e^{-(\alpha-1/2)(1-\beta)R}\right)}{\pi(\alpha-1/2)}e^{-C/2}.
    \end{split}
  \end{equation*}
  Putting everything together we have that the right hand side of~(\ref{eq:degseq-approx-integral}) can be approximated by
  \begin{multline*}
    \bigl(1+o(1)\bigr)
    \frac{2n\alpha e^{-\alpha C}}{k!}
    \Bigl(\frac{2\alpha}{\pi(\alpha-1/2)}\Bigr)^{2\alpha}
    \int_{t_1}^{\infty}
    t^{k-2\alpha-1}e^{-t}dt \\
  =  \bigl(1+o(1)\bigr)
    \frac{2n\alpha e^{-\alpha C}}{k!}
    \Bigl(\frac{2\alpha}{\pi(\alpha-1/2)}\Bigr)^{2\alpha}
    \left( \Gamma(k-2\alpha)-\int_{0}^{t_1} t^{k-2\alpha-1}e^{-t}dt  \right).
  \end{multline*}
The proof is completed with the observation $t_1 = (1 - o(1))\xi$.
\end{proof}


\begin{proof}[Proof of the first part of Theorem~\ref{thm:degree_seq}]
Set $\beta:= \max\{\frac{3}{5}, 1/(2\alpha), 1-\frac{1}{4\alpha}\}+\varepsilon$, where $\varepsilon > 0$ will be chosen later such that $\beta <1$. The total number of vertices of degree $k$, for any $k$ in the considered range, is at most 
\begin{equation}
\label{eq:degkupper}
D_k(\beta)+|I(\beta)|+e(I(\beta),O(\beta))
\end{equation}
and at least 
\begin{equation}
\label{eq:degklower}
D_k(\beta)-e(I(\beta),O(\beta)) ,
\end{equation}
since every vertex in $I$ could possible have degree $k$, and since each edge counted in $e(I,O)$ may affect the degree of one vertex in $O$. We will argue in the sequel that the contribution of $|I|$ and $e(I,O)$ is with high probability negligible in the above equations. 

First, since $\beta \ge 1 - \frac1{4\alpha}$, by applying Lemma~\ref{lem:few_vertices_I} we obtain that with probability at least $1 - e^{-n^{\Omega(1)}}$,
\begin{equation}
\label{eq:Iupper}
|I(\beta)|=O\left(n^{1-2\alpha(1-\beta)}\right).
\end{equation}
Moreover, Lemma~\ref{lem:few_edges_IO} yields that with high probability
\begin{equation}
\label{eq:eIOupper}
e(I(\beta),O(\beta))=O\left(n^{1-(2\alpha-1)(1-\beta)}\log n\right) .
\end{equation}
It remains to determine the value of $D_k$. Our assumptions on $k$ guarantee that $k\le n^{\delta'}$, where $\delta' < \frac{(2\alpha-1)(1-\beta)}{2\alpha+1}$, provided that $\varepsilon>0$ is sufficiently small. Additionally, we claim that $\delta' < 2(2\beta -1)$. To see this, note that since $a > 1/2$
\[
\frac{(2\alpha-1)(1-\beta)}{2\alpha+1} < 2(2\beta-1) \Leftrightarrow \beta > \frac{6\alpha + 1}{10 \alpha + 3} =: h(\alpha).
\]
But $h(\alpha)$ is increasing, and thus it is maximized at $\alpha = \infty$, where $\lim_{\alpha \to \infty}h(\alpha) = 3/5$. This shows that indeed $\delta' < 2(2\beta -1)$. With all these facts at hand we can apply Lemma~\ref{lem:degree_sequence}, which implies that
\begin{equation}
\label{eq:d_k_ass}
\mathbb{E}[D_k(\beta)]= \Theta(nk^{-(2\alpha+1)}) = \Omega(n^{1- (2\alpha+1)\delta'}) \stackrel{(\delta' < \frac{(2\alpha-1)(1-\beta)}{2\alpha+1})}{=} \omega\left(n^{1-(2\alpha-1)(1-\beta)}\log n\right).
\end{equation}
Note that by Lemma~\ref{lem:lipschitz} the effect on $D_k(\beta)$ if we change the coordinates of of vertex $i$ is at most $c_i:=cn^{1-\beta}+1$ (the plus one is for the vertex itself) as long as $\bar{\mathcal{B}}$ holds. We therefore apply Theorem~\ref{lem:concentration} with $f:=D_k(\beta)$, $t:=n^{3/2-\beta+2\alpha\varepsilon}$ and 'bad' event $\mathcal{B}$ as stated in Lemma~\ref{lem:lipschitz}. Note that $M=n$ and therefore $M\cdot \Pr[\mathcal{B}]=o(1)$. It follows that
\[
	\Pr[|D_k(\beta) - \Exp[D_k(\beta)]| \ge t + o(1)] \le 2e^{-\Omega(t^2/n^{1 + 2(1-\beta)})} + e^{-n^{\Omega(1)}}
	= e^{-n^{\Omega(1)}}.
\]
However, $\beta \ge 1 - \frac1{4\alpha} + \varepsilon$ implies for all $0 \le k \le n^{\delta'}$ that
\[
	\Exp[D_k(\beta)] = \omega \left(n^{1-(2\alpha-1)(1-\beta)}\right) = \omega\left(n^{3/2-\beta+2\alpha\varepsilon}\right) = \omega(t).
\]
This shows that with probability at least $1 - e^{-n^{\Omega(1)}}$ we have for every $k$ in the considered range that $D_k(\beta) = (1 + o(1))\Exp[{D_k(\beta)}]$. Together with~\eqref{eq:degkupper}--\eqref{eq:d_k_ass} the proof of the theorem is completed.
\end{proof}

\subsection{Vertices of Large Degree}
In the previous section we derived the degree sequence for vertices of degree $k\le n^{\delta}$ for some constant $\delta$. For $k>n^{\delta}$ we observe that the radius of almost all vertices of degree $k$ is  concentrated around a specific $r_k$. We effectively show that it suffices to bound the number of vertices of radius at most $r_k$ to get a tight bound on the number of vertices of degree at least $k$. 
\begin{proof}[Proof of the second part of Theorem~\ref{thm:degree_seq}]
Let $L_k$ denote the number of vertices of degree at least $k$. Set $r_k:=2(\log (n-1) - \log k+\log(\frac{2\alpha}{\pi(\alpha-1/2)}))$, $q_r:=\mu(B_{r}( R)\cap B_0( R))$ and observe that the expected degree of a vertex with radius $r_k$ is by Lemma~\ref{lem:intersection_area}
$$(n-1)q_{r_k} = (n-1)\frac{2\alpha e^{-r_k/2} }{\pi (\alpha -1/2)}(1\pm O(e^{-(\alpha-1/2)r_k}+e^{-r_k}))=k\left (1\pm O\left(\left(\frac{k}{n}\right)^{2(\alpha -1/2)}+\left(\frac{k}{n}\right)^{2}\right)\right).$$

We set $\varepsilon:= \max\left\{\frac{\log n}{\sqrt{k}},  k^{-(1-2\alpha)^2}\right\}$ and observe that for $\alpha>1/2$ and for all $k\le \frac{n^{\frac{1}{2\alpha}}}{\log n}$
$$O\left(\left(\frac{k}{n}\right)^{2(\alpha -1/2)}+\left(\frac{k}{n}\right)^{2}\right) =o(\varepsilon) .$$

Let $L_k^{<}$ count the vertices of degree at least $k$ and radius at most $r_k-\varepsilon$, $L_k^{>}$ those of degree at least $k$ and radius at least $r_k+\varepsilon$ and $L_k^{\pm}$ those of degree at least $k$ and radius in $[r_k-\varepsilon, r_k+\varepsilon]$. Using those conditions on the radius of the points we can write the expectation of $L_k$ as
\begin{equation}
\label{eq:deg_seq_exp_split}
\mathbb{E}[L_k]=\mathbb{E}[L_k^{<}] +\mathbb{E}[L_k^{>}] +\mathbb{E}[L_k^{\pm}].
\end{equation}
We now show that the first term in \eqref{eq:deg_seq_exp_split} dominates the second and the third. 

Let us first inspect $\mathbb{E}[L_k^{>}] $. By Lemma~\ref{lem:monotone}, the degree distribution of a vertex at distance at least $r_k+\varepsilon$ is dominated by $X\sim Bin(n-1, \mu(B_{r_k+\varepsilon}( R)\cap B_0( R)))$. The expectation of such a r.v. is by Lemma~\ref{lem:intersection_area} and the above observation 
$$\mathbb{E}[X]=(n-1)\frac{2\alpha e^{-\frac{r_k+\varepsilon}{2}}}{\pi(\alpha-1/2)}(1+ o(\varepsilon))= k(1-\varepsilon/2(1+o(1))).$$
By the Chernoff bound the probability that such a vertex has degree at least $k$ is at most 
$$\Pr[X\geq(1+\varepsilon/2(1+o(1)))\mathbb{E}[X]]\le e^{-\frac{\mathbb{E}[X](\varepsilon/2(1+o(1)))^2}{3}}=e^{-\Omega(\log^2n)}=n^{-\Omega(\log n)}$$
and therefore 
\begin{equation}
\label{eq:full_deg_seq_larger_radius}
\mathbb{E}[L_k^>]\le nn^{-\Omega(\log n)}=o(1).
\end{equation}

For points which have distance at most $r_k-\varepsilon$ a similar argument holds as the degree distribution of all those points  dominates  $Y\sim Bin(n-1, \mu(B_{r_k-\varepsilon}( R)\cap B_0( R)))$. This random variable has by Lemma~\ref{lem:intersection_area} expectation 
$$\mathbb{E}[Y]=(n-1)\frac{2\alpha e^{-\frac{r_k-\varepsilon}{2}}}{\pi(\alpha-1/2)}(1+o(\varepsilon))= k(1+\varepsilon/2(1+o(1))).$$
Hence the probability that such a vertex has degree smaller than $k$ is at most
\begin{equation}
\label{eq:full_deg_seq_prob_small_radius}
\Pr[Y\le(1-\varepsilon/2(1+o(1)))\mathbb{E}[Y]]\le e^{-\frac{\mathbb{E}[Y](\varepsilon/2(1+o(1)))^2}{2}}=e^{-\Omega(\log^2n)}=n^{-\Omega(\log n)}
\end{equation}
and therefore $\mathbb{E}[L_k^{<}]=(1-o(1))\mathbb{E}[X_{\le r_k-\varepsilon}]$. Recall that $X_{\le r_k-\varepsilon}$ denotes the number of points with radius at most $r_k-\varepsilon$ and that its expectation is by Lemma~\ref{lem:intersection_area}
\begin{equation}
\label{eq:full_deg_seq_small_radius}
\mathbb{E}[X_{\le r_k-\varepsilon}]=(1+o(1))ne^{-\alpha(R-(r_k-\varepsilon))}=(1+o(1))\left(\frac{2\alpha}{\pi(\alpha-1/2)} \right)^{2\alpha}e^{-\alpha C} n k^{-2\alpha} .
\end{equation}

For the third term in \eqref{eq:deg_seq_exp_split} it suffices to look only at points which have their radius in $[r_k-\varepsilon, r_k+\varepsilon]$ and neglect whether their degree is at least $k$. Clearly
\begin{equation}
\label{eq:deg_seq_in_between}
\mathbb{E}[L_k^{\pm}]\le \mathbb{E}[X_{\le r_k+\varepsilon}]-\mathbb{E}[X_{\le r_k-\varepsilon}]\stackrel{\eqref{eq:full_deg_seq_small_radius}}{=} o(\mathbb{E}[X_{\le r_k-\varepsilon}])   =o(\mathbb{E}[L_k^{<}])
\end{equation}
and we can therefore conclude that 
$$\mathbb{E}[L_k]=(1+o(1))\mathbb{E}[X_{\le r_k-\varepsilon}]=(1+o(1))\left(\frac{2\alpha}{\pi(\alpha-1/2)} \right)^{2\alpha}e^{-\alpha C}nk^{-2\alpha}.$$
Note that for $k\le \frac{n^{\frac{1}{2\alpha}}}{\log n}$, $\mathbb{E}[L_k]=\Omega (\log^{2\alpha}n)$ and therefore by the Chernoff bound
$$\Pr\left[\left|X_{\le r_k-\varepsilon}-\mathbb{E}[X_{\le r_k-\varepsilon}]\right|\geq \frac{1}{\log\log n} \mathbb{E}[X_{\le r_k-\varepsilon}]\right]\le o(n^{-1})$$
which proves the statement of the theorem.  
\end{proof}

\subsection{The Average \& Maximum Degree}
In the subsequent proofs for Theorem~\ref{thm:average_degree} and Theorem~\ref{thm:max_degree}, we present two simple applications of Lemma~\ref{lem:intersection_area}. In particular, we show how it can be used to determine the average degree and the maximum degree in $G_{\alpha, C}(n)$.

\begin{proof}[Proof of Theorem~\ref{thm:average_degree}]
The average degree of a fixed vertex at radius $r$ is by definition 
\begin{equation}
\label{eq:avgkr}
\overline{k}(r)= (n-1) \mu(B_r(R)\cap B_0(R))\stackrel{\text{(Lem.\ \ref{lem:intersection_area})}}{=}(n-1) \left( \frac{2\alpha e^{-r/2}}{\pi(\alpha-1/2)} \Bigl(1\pm O\bigl(e^{-(\alpha-1/2)r}+e^{-r}\bigr)\Bigr) \right).
\end{equation}
The total average degree can be obtained by integrating $\overline{k}(r)p(r)$ over all $0 \le r\le R$. With the facts $2\sinh(x) = e^x(1 - \Theta(e^{-2x}))$ and $2\cosh(x) = e^x(1 + \Theta(e^{-2x}))$, valid for any $x \ge 0$, it follows that
\begin{equation}
\label{eq:prestimate}
	p(r)
	= \frac{ \alpha\sinh(\alpha r)}{\cosh(\alpha R)-1}
	= \alpha e^{\alpha(r-R)} \cdot (1 \pm O(e^{-2\alpha r} + e^{-\alpha R}))
\end{equation}
Note that since $\alpha > 1/2$ and $0 \le r \le R$, the error terms $e^{-2\alpha r} + e^{-\alpha R}$ in~\eqref{eq:prestimate} are dominated by the error terms $e^{-(\alpha-1/2)r}+e^{-r}$ in~\eqref{eq:avgkr}. Hence, by abbreviating $c_\alpha = \frac{2\alpha^2}{\pi(\alpha - 1/2)}$ we obtain
\begin{align*}
\overline{k}
&= \int_0^Rp(r)\overline{k}(r)dr
= (1+o(1)) c_\alpha n \int_0^R  e^{\alpha(r-R)}\cdot e^{-r/2}\left(1 \pm O\left(e^{-(\alpha-1/2)r} + e^{-r}\right)\right) dr.
\end{align*}
The last integral is elementary. In particular, recalling that $R = 2\log n + C$, we obtain that
\[
	\int_0^R  e^{\alpha(r-R)}\cdot e^{-r/2} dr
	= 
	\frac{e^{-\alpha R}}{\alpha - 1/2} \left[e^{(\alpha - 1/2)r}\right]_{r=0}^R
	=
	(1+o(1))\frac{e^{-R/2}}{\alpha - 1/2}
	= (1+o(1))\frac{n^{-1}e^{-C/2}}{\alpha - 1/2}.
\]
The integrals over the error terms are of order
$$O\left(n\int_0^Re^{\alpha(r-R)}e^{-r/2}e^{-(\alpha -1/2)r}dr \right)=O\left(n e^{-\alpha R}\int_0^Rdr \right)=o(1) $$
and
$$O\left(n e^{-\alpha R}\int_0^R e^{r(\alpha -3/2)}dr\right)=O\left(ne^{-\alpha R}e^{R(\alpha -3/2)}\right)=o(1).$$
Finally,
\[
	\overline{k} = (1+o(1)) \frac{c_\alpha \, e^{-C/2}}{\alpha-1/2} = (1+o(1))\frac{2\alpha^2\, e^{-C/2}}{\pi(\alpha - 1/2)^2},
\]
and the theorem follows.
\end{proof}

\begin{proof}[Proof of Theorem~\ref{thm:max_degree}]
Let $X_{\le r}$ denote the number of vertices with radius at most $r$, and define $X_{\geq r}$ similarly. Set $r_0:=(2-1/\alpha)\log n$. We will consider vertices with radius $r\in [r_0-\phi(n), r_0+\phi(n)]$ where $\phi(n)=o(\log \log n)$  tends to infinity as $n\rightarrow \infty$. Note that for sufficiently large $n$
\begin{equation}
\label{eq:max_degree_no_vertices_inside}
\mathbb{E}[X_{\le r_0-\phi(n)}]= n\cdot \mu\left(B_0\left(r_0-\phi(n)\right)\right)\stackrel{(\text{Lem.\ \ref{lem:intersection_area}})}{\le} 2 e^{-\alpha (\phi(n)+C)}=o(1).
\end{equation}
It follows that with high probability there are no vertices with radius at most $r_0-\phi(n)$. Moreover, by applying~\eqref{eq:ball-area}
\begin{equation}
\label{eq:massr0bounds}
\frac{e^{\alpha (\phi(n)-C)}}{2n}\le \mu\left(B_0(r_0+\phi(n))\right)\le \frac{2e^{\alpha (\phi(n)-C)}}{n}.
\end{equation}
Since the number of vertices with radius $\le r_0 + \phi(n)$ is binomially distributed, we infer that
\begin{equation}
\label{eq:max_degree_vertices_inside}
\Pr[X_{\le r_0+\phi(n)}=0]\le \left(1- \mu(B_0(r_0+\phi(n))) \right)^n\le e^{-{e^{\alpha (\phi(n)-C)}}/{2}}=o(1).
\end{equation}
That is, with high probability there is a vertex with radius at most $r_0+\phi(n)$. On the other hand, we will argue that the number of such vertices is very small. To see this, note that~\eqref{eq:massr0bounds} implies that
$$\mathbb{E}[X_{\le r_0+\phi(n)}]\le 2e^{\alpha( \phi(n)-C)},$$
and by applying Markov's inequality we infer, since $\phi(n) = o(\log\log n)$, that
$$\Pr[X_{\le r_0+\phi(n)}>\log n] \le \frac{2e^{\alpha(\phi(n)-C)}}{\log n}=o(1).$$
In other words, the number of vertices with radius larger than $r_0 + \phi(n)$ is at least $n - \log n$ with high probability.

Note that the degree of a vertex at radius $r$ is distributed like $\textrm{Bin}\bigl(n-1, \mu(B_0(R)\cap B_r(R))\bigr)$. As there are with high probability no vertices of radius smaller than $r_0-\phi(n)$, by Lemma~\ref{lem:monotone} the degree distribution of every vertex is with high probability dominated by $\textrm{Bin}(n, p_{r_0}^-)$, where by Lemma~\ref{lem:intersection_area}
$$p_{r_0}^- :=\mu(B_0(R)\cap B_{r_0-\phi(n)}(R))=(1+o(1))\frac{2\alpha}{\pi(\alpha-1/2)}e^{-\frac{r_0-\phi(n)}{2}}.$$
In this case the expected degree of a fixed vertex is at most $np_{r_0}^-\le (1+o(1))\frac{2\alpha}{\pi(\alpha -1/2)}n^{\frac{1}{2\alpha}}e^{\frac{\phi(n)}{2}}$ and by a Chernoff bound and a union bound there is with high probability no vertex of degree larger than $(1+o(1))\frac{4e\alpha}{\pi(\alpha-1/2)}n^{\frac{1}{2\alpha}}e^{\frac{\phi(n)}{2}}$.

The lower bound can be established by the fact (shown in \eqref{eq:max_degree_vertices_inside}) that there is with high probability a vertex of radius at most $r_0+\phi(n)$. There are with high probability at most $\log n$ such vertices which means that we can fix one and have that by Lemma~\ref{lem:monotone} its degree distribution dominates $\textrm{Bin}(n-\log n, p_{r_0}^+)$ where by Lemma~\ref{lem:intersection_area} 
$$p_{r_0}^+:=\mu(B_0(R)\cap B_{r_0+\phi(n)}(R)\setminus B_0(r_0+\phi(n)))=(1+o(1))\frac{2\alpha}{\pi(\alpha-1/2)}e^{-\frac{r_0+\phi(n)}{2}}.$$
Note that the expectation of $\textrm{Bin}(n-\log n, p_0^+)$ is $(1+o(1))np^+_0=(1+o(1))\frac{2\alpha}{\pi(\alpha -1/2)}n^{\frac{1}{2\alpha}}e^{-\frac{\phi(n)}{2}}$ and is therefore by a Chernoff bound with high probability not smaller than $\frac{\alpha}{\pi(\alpha-1/2)}n^{\frac{1}{2\alpha}}e^{-\frac{\phi(n)}{2}}$.
\end{proof}

\footnotesize
\bibliographystyle{plain}
\bibliography{refs}

\begin{thebibliography}{10}

\bibitem{aiello2001random}
William Aiello, Fan Chung, and Linyuan Lu.
\newblock Random evolution in massive graphs.
\newblock In {\em Proceedings of the 42nd {IEEE} {S}ymposium on {F}oundations
  of {C}omputer {S}cience}, pages 510--519. IEEE, 2001.

\bibitem{albert1999internet}
R.~Albert, H.~Jeong, and A.L. Barab{\'a}si.
\newblock Internet: Diameter of the world-wide web.
\newblock {\em Nature}, 401(6749):130--131, 1999.

\bibitem{amaral2000classes}
L.A.N. Amaral, A.~Scala, M.~Barth{\'e}l{\'e}my, and H.E. Stanley.
\newblock Classes of small-world networks.
\newblock {\em Proceedings of the National Academy of Sciences}, 97(21):11149,
  2000.

\bibitem{book:a05}
J.~W. Anderson.
\newblock {\em Hyperbolic geometry}.
\newblock Springer, second edition, 2005.

\bibitem{barabasi1999emergence}
A.-L. Barab\'{a}si and R.~Albert.
\newblock {Emergence of Scaling in Random Networks}.
\newblock {\em Science}, 286(5439):509--512, 1999.

\bibitem{boguna2010sustaining}
M.~Bogu\~{n}\'{a}, F.~Papadopoulos, and D.~Krioukov.
\newblock {Sustaining the Internet with hyperbolic mapping.}
\newblock {\em Nature communications}, 1(62), 2010.

\bibitem{bollobas2001degree}
B.~Bollob{\'a}s, O.~Riordan, J.~Spencer, and G.~Tusn{\'a}dy.
\newblock The degree sequence of a scale-free random graph process.
\newblock {\em Random Structures and Algorithms}, 18(3):279--290, 2001.

\bibitem{bollobas2002mathematical}
B.~Bollob{\'a}s and O.M. Riordan.
\newblock Mathematical results on scale-free random graphs.
\newblock {\em Handbook of graphs and networks}, pages 1--34, 2002.

\bibitem{borgs2007first}
C.~Borgs, J.~Chayes, C.~Daskalakis, and S.~Roch.
\newblock First to market is not everything: an analysis of preferential
  attachment with fitness.
\newblock In {\em Proceedings of the 39th ACM Symposium on Theory of
  computing}, pages 135--144. ACM, 2007.

\bibitem{buckley2004popularity}
P.G. Buckley and D.~Osthus.
\newblock Popularity based random graph models leading to a scale-free degree
  sequence.
\newblock {\em Discrete Mathematics}, 282(1-3):53--68, 2004.

\bibitem{chierichetti2009models}
F.~Chierichetti, R.~Kumar, S.~Lattanzi, A.~Panconesi, and P.~Raghavan.
\newblock Models for the compressible web.
\newblock In {\em Proceedings of the 50th IEEE Symposium on Foundations of
  Computer Science}, pages 331--340. IEEE, 2009.

\bibitem{cooper2003general}
C.~Cooper and A.~Frieze.
\newblock A general model of web graphs.
\newblock {\em Random Structures \& Algorithms}, 22(3):311--335, 2003.

\bibitem{dubhashi2009concentration}
D.~Dubhashi, A.~Panconesi, and Cambridge~University Press.
\newblock {\em Concentration of measure for the analysis of randomized
  algorithms}.
\newblock Cambridge University Press, 2009.

\bibitem{faloutsos1999power}
M.~Faloutsos, P.~Faloutsos, and C.~Faloutsos.
\newblock On power-law relationships of the internet topology.
\newblock In {\em ACM SIGCOMM Computer Communication Review}, volume~29, pages
  251--262. ACM, 1999.

\bibitem{kleinberg2007geographic}
R.~Kleinberg.
\newblock {Geographic routing using hyperbolic space}.
\newblock In {\em Proceedings of the 26th IEEE International Conference on
  Computer Communications, INFOCOM'07}, pages 1902--1909, 2007.

\bibitem{krioukov2010hyperbolic}
D.~Krioukov, F.~Papadopoulos, M.~Kitsak, A.~Vahdat, and M.~Bogu\~{n}\'{a}.
\newblock {Hyperbolic Geometry of Complex Networks}.
\newblock {\em Physical Review E}, 82(3), 2010.

\bibitem{lattanzi2009affiliation}
S.~Lattanzi and D.~Sivakumar.
\newblock Affiliation networks.
\newblock In {\em Proceedings of the 41st ACM Symposium on Theory of
  Computing}, pages 427--434. ACM, 2009.

\bibitem{leskovec2005graphs}
J.~Leskovec, J.~Kleinberg, and C.~Faloutsos.
\newblock Graphs over time: densification laws, shrinking diameters and
  possible explanations.
\newblock In {\em Proceedings of the 11th ACM SIGKDD international conference
  on Knowledge discovery in data mining}, pages 177--187. ACM, 2005.

\bibitem{milgram1967small}
S.~Milgram.
\newblock The small world problem.
\newblock {\em Psychology today}, 2(1):60--67, 1967.

\bibitem{mitzenmacher2004brief}
M.~Mitzenmacher.
\newblock {A brief history of generative models for power law and lognormal
  distributions}.
\newblock {\em Internet mathematics}, 1(2):226--251, 2004.

\bibitem{newman2003social}
M.E.J. Newman and J.~Park.
\newblock Why social networks are different from other types of networks.
\newblock {\em Physical Review E}, 68(3):036122, 2003.

\bibitem{ng2002}
T.S.E. Ng and H.~Zhang.
\newblock {Predicting Internet network distance with coordinates-based
  approaches}.
\newblock In {\em INFOCOM 2002. Twenty-First Annual Joint Conference of the
  IEEE Computer and Communications Societies}, volume~1, pages 170--179. IEEE,
  2002.

\bibitem{papadimitriou2005conjecture}
C.~H. Papadimitriou and D.~Ratajczak.
\newblock {On a conjecture related to geometric routing}.
\newblock {\em Theoretical Computer Science}, 344(1):3--14, 2005.

\bibitem{papadopoulos2010greedyforwarding}
F.~Papadopoulos, D.~Krioukov, M.~Bogu\~{n}\'{a}, and A.~Vahdat.
\newblock Greedy forwarding in dynamic scale-free networks embedded in
  hyperbolic metric spaces.
\newblock In {\em Proceedings of the 29th IEEE International Conference on
  Computer Communications, INFOCOM'10}, pages 2973--2981, 2010.

\bibitem{penrose2003random}
M.~Penrose.
\newblock {\em Random geometric graphs}, volume~5.
\newblock Oxford University Press, USA, 2003.

\bibitem{serrano2006clustering}
M.{\'A}. Serrano and M.~Bogu{\~n}{\'a}.
\newblock Clustering in complex networks. i. general formalism.
\newblock {\em Physical Review E}, 74(5):056114, 2006.

\bibitem{shavitt2004curvature}
Y.~Shavitt and T.~Tankel.
\newblock On the curvature of the internet and its usage for overlay
  construction and distance estimation.
\newblock In {\em INFOCOM 2004. Twenty-third AnnualJoint Conference of the IEEE
  Computer and Communications Societies}, volume~1. IEEE, 2004.

\bibitem{travers1969experimental}
J.~Travers and S.~Milgram.
\newblock An experimental study of the small world problem.
\newblock {\em Sociometry}, pages 425--443, 1969.

\bibitem{watts1998}
D.~J. Watts and S.~H. Strogatz.
\newblock {Collective dynamics of small-world networks}.
\newblock {\em Nature}, 393(6684):440--442, June 1998.

\end{thebibliography}
\end{document}